\def \R {I\!\!R}
\newenvironment{proof of lemma}
{\medskip\noindent{\bf Proof of Lemma 1.1\/}} {\null \hfill $\Box$
\par\medskip}
\newenvironment{proof}
{\medskip\noindent{\bf Proof.\/}} {\null \hfill $\Box$
\par\medskip}
\newcommand{\ba}{\begin{eqnarray}}
\newcommand{\ea}{\end{eqnarray}}
\newcommand{\basn}{\begin{eqnarray*}}
\newcommand{\easn}{\end{eqnarray*}}
\newtheorem{theorem}{\bf Theorem}[section]
\newtheorem{lemma}{\bf Lemma}[section]
\newtheorem{proposition}[theorem]{\bf Proposition}
\newtheorem{remark}{\bf Remark}[section]
\numberwithin{equation}{section}
\newcommand\AMSname{AMS subject classifications}
\title{A nonlinear general Neumann problem involving two critical exponents}
\author{Rejeb Hadiji\thanks{Universit\'e Paris-Est, Laboratoire d'Analyse et
de Math\'ematiques Appliqu\'ees (LAMA), CNRS : UMR 8050, 
61, Avenue du G\'en\'eral de Gaulle B\^at. P3, 4e
\'etage, F-94010 Cr\'eteil Cedex, France.\hskip3cm E-mail : hadiji@u-pec.fr}
$\,$ and Habib Yazidi
\thanks{E-mail : habib.yazidi@u-pec.fr}$^\dag$ $^\ast$ }
\date{}
\begin{document}
\maketitle

\begin{abstract}
We discuss the existence of solutions to the following nonlinear problem involving two critical Sobolev exponents
\begin{eqnarray*}\left\{
\begin{array}{lll} -\textsl{div}(p(x)\nabla u)=\beta |u|^{2^{*}-2}u+f(x,u) &\textrm{in $\Omega$,}\\
u\not\equiv0 &\textrm{in $\Omega$,}\\
\frac{\partial u}{\partial \nu}=Q(x)|u|^{2_{*}-2}u
&\textrm{on $\partial\Omega$,}
\end{array}
\right. \end{eqnarray*}
where $\beta \geq 0$, $Q$ is continuous on $\partial\Omega$, $p\in H^{1}(\Omega)$ is continuous and positive in
$\bar{\Omega}$ and $f$ is a lower-order perturbation of $|u|^{2^{*}-1}$ with $f(x,0)=0$.
\medskip

\noindent Keywords : {Sobolev critical exponent, The trace
embedding, Variational problem, Critical nonlinearity in the
boundary, Palais-Smale Condition, The mean curvature.}
\medskip

\par
\noindent2010 \AMSname: 35J20, 35J25, 35J60.
\end{abstract}
\section{ Introduction}
In this work, we deal with the following problem
\begin{eqnarray} \left\{
\begin{array}{lll} -\textsl{div}(p(x)\nabla u)=\beta |u|^{2^{*}-2}u+f(x,u) &\textrm{in $\Omega$,}\\
\hspace{21mm}u\not\equiv0 &\textrm{in $\Omega$,}\\
\hspace{20mm}\frac{\partial u}{\partial \nu}=Q(x)|u|^{2_{*}-2}u
&\textrm{on $\partial\Omega$,}
\end{array}
\right. \label{nouveau} \end{eqnarray} where $\Omega\subset \R^{N},\, N\geq
3$, is a bounded domain with the smooth boundary $\partial\Omega$,
$\nu$ is the outer normal on $\partial \Omega$, $\beta \geq 0$ is a constant, the coefficient $Q$ is continuous on $\partial\Omega$, the
coefficient $p\in H^{1}(\Omega)$ is continuous and positive in
$\bar{\Omega}$ and $f(x,u):\Omega\times
\R\rightarrow \R$ is measurable in $x$, continuous in $u$.\\
Here, $2_{*}=\frac{2(N-1)}{N-2}$ is the critical Sobolev exponent for the
trace embedding of the space $H^{1}(\Omega)$ into
$L^{2_{*}}(\partial\Omega)$ and $2^{*}=\frac{2N}{N-2}$ is the critical Sobolev exponent for the embedding $H^{1}(\Omega)$ into $L^{2^{*}}(\Omega)$. Both embedding are continuous, but not compact. Our goal is to study the existence of solutions to problem (\ref{nouveau}).\\
The main motivation to consider such problem is the study of conformal deformations of Riemannian manifolds with boundary, see \cite{C}, \cite{E1} and \cite{E2}.\\
 Problem(\ref{nouveau}) has a variational form. Then the eventual solutions correspond to the critical points of the energy functional.

 The existence of a solution of (\ref{nouveau}) is closely related to $S$ (resp. $S_{1}$) which is the best Sobolev constant for the imbedding $H^{1}(\Omega)$ into $L^{2^{*}}(\Omega)$ (resp. for the imbedding $H^{1}(\Omega)$ into
$L^{2_{*}}(\partial\Omega)$). As in \cite{BN} for the nonlinear
Dirichlet problem with critical Sobolev exponent,  we will fill out
the sufficient conditions to find  solutions for the problem in
presence of a nonlinear Neumann boundary data with a critical
nonlinearity. One of the difficulty of our problem, besides the fact
that the associated functional does not satisfy the Palais-Smale
compactness condition (PS), is that it possesses four levels of
homogeneity.

Let us recall some works related to the problem (\ref{nouveau}). If
$p\equiv 1$ and $u$ satisfies homogeneous Dirichlet condition,
problem (\ref{nouveau}) has been treated in \cite{BN}, where the
authors obtained positive solutions with energy less than
$\frac{1}{N} S^{\frac{N}{2}}$, see also \cite{H1} and \cite{CHL}. In \cite{S}, the author gives a
complete description of the energy levels $c$, associated to problem
(\ref{nouveau}), on which $(PS)_{c}$ sequence is not compact. For
the case $p\not\equiv 1$, $f(x,u)=\lambda \,u$ with homogeneous
Dirichlet condition we refer the reader to \cite{HY, HMPY} . For the
homogeneous Neumann problem, in \cite{CR}, the authors proved the
existence of solution with
energy less than $\frac{1}{2 N} S^{\frac{N}{2}}$.\\
The case $p\equiv
1\equiv Q$, $\beta= 0$ and $f(x,u)$ is a linear perturbation, has an
extensive literature and the first existence results was treated in
\cite{AM, AY, CW, CY}. In this case the solutions are obtained as
minimizers of the variational problem associated to (\ref{nouveau})
with energy less than $S_{1}$. If $\beta=0$ and $f(x,u)$ has an
explicit form, problem (\ref{nouveau}) has been studied in \cite{Y1,
Y2} and some existence results are obtained.

In \cite{CFT}, the authors were interested to the case $p\equiv 1$, $f(x,\,u)=0$ and the presence of two critical nonlinearities. They derived some existence results by the use of the concentration compactness principle see  \cite{L}. For another form of equation (\ref{nouveau}) with competing critical nonlinearities, see \cite{PT} and references therein.

In this paper we are concerned with the general case, more precisely, $p\not\equiv 1$, $Q \not\equiv 0$ and $f(x,u)\not = 0$. We assume that $f$ is a lower-order
perturbation of $|u|^{2^{*}-1}$ and $f(x,0)=0$.\\
Let $p_{0}=\displaystyle\min_{x\in\bar{\Omega}}p(x)$ and
$x_{0}\in\partial\Omega$ satisfy
$$\frac {(Q(x_{0}))^{N-2}}{p(x_{0})}=\max_{x\in\partial\Omega}\frac {|Q(x)|^{N-2}}{p(x)}.$$
We assume that
\begin{equation}|p(x)-p(x_{0})|=o(|x-x_{0}|)\label{eqalphap}\end{equation} and
\begin{equation}
|Q(x)-Q(x_{0})|=o(|x-x_{0}|)\label{eqalphaq}\end{equation}
for $x$ near $x_{0}.$\\

Our first contribution to problem (\ref{nouveau}), in section 2, is an existence result for the case where $\beta=0$. The energy solutions which we find are under the level on which the (PS) condition failed. More precisely, we show existence of solutions with energy in $]0,\, \frac{1}{2(N-1)}\frac{p(x_{0})}{(Q(x_{0}))^{N-2}}S_{1}^{N-1}[$.\\

Next, in section 3, we turn to the general case and look for solutions for problem (\ref{nouveau}) in the case of the presence of competing critical nonlinearities in the case $p(x_{0})=p_{0}$.\\
The main difficulty of the problem in caused by the presence of two
critical exponents and a general nonlinear perturbation. This fact
causes the change in energy level for which the Palais Smale
condition (PS) is not satisfied. In this paper, we determine
explicitly the new energy level $M(S,S_{1})$ defined by
\begin{equation}
M(S,S_{1})=\displaystyle \frac{\frac{p(x_{0})}{(Q(x_{0}))^{N-2}}S_{1}^{N-1}\,2^{N-2}}{\left[1+\sqrt{1+4 E}\right]^{N-2}}\displaystyle\left[\frac{1}{N}-\frac{N-2}{N(N-1)}\frac{1}{1+\sqrt{1+4 E}}\right]
\label{eqnouveau valeur1}
\end{equation}
where $E=\left(\frac{\frac{p(x_{0})}{Q(x_{0})^{N-2}}S_{1}^{N-1}}{(p_{0}S)^{\frac{N}{2}}}\right)^{\frac{2}{N-2}}$.
We will show the existence of solution for (\ref{nouveau}) with energy in $]0,\,M(S, S_{1})[$.\\
Note that
$$0<M(S,S_{1})< \min\left\{\frac{1}{2(N-1)}\frac{p(x_{0})}{(Q(x_{0}))^{N-2}}S_{1}^{N-1},\,\frac{1}{N} (p_{0}S)^{\frac{N}{2}}\right\}.$$
%%%%%%%%%%%%%%%
%%%%%%%%%%%%%%%%%%%%%%%%%%
%%%%%%%%%%%%%%%%%%%%
\section{Existence results for $\beta=0$}
We assume that $f(x,u)$ can be written as
\begin{equation}
f(x,u)=a(x)u+g(x,u), \label{eq1}
\end{equation}
with
\begin{equation}
a(x)\in L^{\infty}(\Omega),
\end{equation}
\begin{equation}
\begin{array}{ll}
\textrm{there exists $2<\alpha\leq 2_{*}$ such that, for every $x\in\R^{N}$ and $u\in \R$,}\\
\alpha G(x, u)\leq u\, g(x, u),\,\,\textrm{where $G(x,u)=\int_{0}^{u}g(x,t)dt$},
\label{eqI2}
\end{array}
\end{equation}
\begin{equation}
|g(x,u)|=o(|u|)\quad\textrm{as\quad $u\rightarrow 0$,\quad
uniformly in $x$,} \label{eq2}
\end{equation}
\begin{equation}
|g(x,u)|=O(|u|^{2_{*}-1})\quad\textrm{as $|u|\rightarrow +\infty$,\quad
uniformly in $x$.} \label{eq3}
\end{equation}
or
\begin{equation}
\begin{array}{ll}
|g(x,u)|=O(|u|^{r-1})\quad\textrm{as $|u|\rightarrow +\infty$,\quad
uniformly in $x$},\\ \textrm{where $r$ is such that $2_{*}<r<2^{*}$,}
\end{array}
\label{eqI1}
\end{equation}
Moreover, we assume that the first eigenvalue $\lambda_{1}(a)$ of the following problem is positive:
\begin{eqnarray*}
 \left\{
\begin{array}{lll}
 -\textsl{div}(p(x)u)-a(x)u=\mu u &\textrm{in $\Omega$}\\
  \frac{\partial u}{\partial \nu}=0
&\textrm{on $\partial\Omega$,}
\end{array}
\right.
\end{eqnarray*}
 That is,
\begin{equation}
\lambda_{1}(a)=\displaystyle\inf_{u\,\in H^{1}(\Omega)} \left\{\int_{\Omega}|\nabla u|^{2}-a(x)u^{2}dx,\,\,\,\int_{\Omega}u^{2}dx=1 \right\}> 0 .
\label{eq4}
\end{equation}
Under assumption (\ref{eq4}), it is easy to verify that $||u||=(\int_{\Omega}|\nabla u|^{2}-a(x)u^{2}dx)^{\frac{1}{2}}$ is a norm on $H^{1}(\Omega)$ equivalent to the usual norm $\|.\|_{H^{1}}$.\\
Let
$$\Phi(u)=\frac{1}{2}\int_{\Omega}\hspace{-2mm}p(x)|\nabla
u|^{2}dx-\int_{\Omega}\hspace{-2mm}F(x,u)
dx-\frac{1}{2_{*}}\int_{\partial\Omega}\hspace{-2mm}p(x)Q(x)|u|^{2_{*}}ds_{x},\,u\in
H^{1}(\Omega),$$
where $F(x,u)=\displaystyle \int_{0}^{u}f(x,t)dt$ for $x\in
\bar{\Omega},$ $u\in\R$.
Our main result in this section is
\begin{theorem}~\\
Assume (\ref{eq1})-(\ref{eq3}) and (\ref{eq4}) or (\ref{eq1})-(\ref{eq2}) and (\ref{eqI1})-(\ref{eq4}). Moreover suppose that
\begin{equation}
\begin{array}{ll}
\textrm{there exists some $v_{0}\in H^{1}$,\,$v_{0}\geq 0$ on
$\Omega$,\,$v_{0}\neq 0$ on $\partial\Omega$, such that}\\
\displaystyle \sup_{t\geq 0}\Phi(t
v_{0})<\frac{1}{2(N-1)}\frac{p(x_{0})}{\left(Q(x_{0})\right)^{N-2}}
S_{1}^{N-1}.
\end{array}
\label{important}
\end{equation}
Then problem (\ref{nouveau}) possesses a solution. \label{th1}
\end{theorem}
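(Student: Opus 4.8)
The plan is to realize the solution as a mountain--pass critical point of the functional $\Phi$. Since $\beta=0$, the only noncompact nonlinearity is the boundary term $\frac{1}{2_{*}}\int_{\partial\Omega}p(x)Q(x)|u|^{2_{*}}ds_{x}$; the interior perturbation is subcritical, because $H^{1}(\Omega)\hookrightarrow L^{q}(\Omega)$ is compact for $q<2^{*}$ and, under (\ref{eq2})--(\ref{eq3}) or (\ref{eqI1}), the growth of $g$ stays strictly below $2^{*}$, so the maps $u\mapsto\int_{\Omega}G(x,u)dx$ and $u\mapsto\int_{\Omega}g(x,u)u\,dx$ are weakly continuous. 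First I would verify the mountain--pass geometry of $\Phi$. Near the origin the positivity $\lambda_{1}(a)>0$ makes the quadratic part coercive, $\frac12\int_{\Omega}(p|\nabla u|^{2}-a u^{2})\geq c_{0}\|u\|^{2}$; combined with $\int_{\Omega}G=o(\|u\|^{2})$ from (\ref{eq2}) and with the boundedness of the trace embedding (so that the $2_{*}$--term is $o(\|u\|^{2})$), this gives $\Phi(u)\geq\rho_{0}>0$ on a small sphere $\|u\|=\rho$. The function $v_{0}$ of (\ref{important}) furnishes the other end of the path: (\ref{important}) in particular guarantees $\sup_{t\geq0}\Phi(tv_{0})<\infty$, whence $\Phi(tv_{0})\rightarrow-\infty$ as $t\rightarrow+\infty$ and $e=t^{*}v_{0}$ satisfies $\Phi(e)<0$ for $t^{*}$ large.

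The mountain--pass theorem then yields a $(PS)_{c}$ sequence $(u_{n})$ at the level $c=\inf_{\gamma}\max_{t}\Phi(\gamma(t))$. Testing along the admissible path $t\mapsto tv_{0}$ and using (\ref{important}) gives $0<\rho_{0}\leq c\leq\sup_{t\geq0}\Phi(tv_{0})<T$, where I abbreviate $T=\frac{1}{2(N-1)}\frac{p(x_{0})}{(Q(x_{0}))^{N-2}}S_{1}^{N-1}$. Boundedness of $(u_{n})$ in $H^{1}(\Omega)$ is the standard consequence of the Ambrosetti--Rabinowitz condition (\ref{eqI2}) together with the coercivity from $\lambda_{1}(a)>0$, applied to a suitable combination of $\Phi(u_{n})$ and $\langle\Phi'(u_{n}),u_{n}\rangle$. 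Passing to a subsequence, $u_{n}\rightharpoonup u$ in $H^{1}(\Omega)$, with strong convergence in $L^{2}(\Omega)$, in $L^{s}(\partial\Omega)$ for $s<2_{*}$ and almost everywhere; a standard passage to the limit in the weak formulation (the critical boundary term passing by a.e.\ convergence plus the uniform bound in $L^{2_{*}/(2_{*}-1)}(\partial\Omega)$) shows $\Phi'(u)=0$, so $u$ is a weak solution, possibly trivial.

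The core of the argument is to show that the strict inequality $c<T$ forces $u_{n}\rightarrow u$ strongly. Put $v_{n}=u_{n}-u$, so $v_{n}\rightharpoonup0$. Applying the Brezis--Lieb lemma to $\int_{\Omega}p|\nabla\cdot|^{2}$ and to $\int_{\partial\Omega}pQ|\cdot|^{2_{*}}$, and using the compactness of the interior terms, I obtain $\Phi(u_{n})=\Phi(u)+\frac12\int_{\Omega}p|\nabla v_{n}|^{2}-\frac{1}{2_{*}}\int_{\partial\Omega}pQ|v_{n}|^{2_{*}}+o(1)$, while $\langle\Phi'(u_{n}),u_{n}\rangle\rightarrow0$ and $\langle\Phi'(u),u\rangle=0$ give $\int_{\Omega}p|\nabla v_{n}|^{2}-\int_{\partial\Omega}pQ|v_{n}|^{2_{*}}\rightarrow0$; call $b$ their common limit. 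A trace Sobolev inequality localized near a boundary concentration point $\bar{x}$ yields $b\geq p(\bar{x})S_{1}L^{2/2_{*}}$ and $b\leq p(\bar{x})Q(\bar{x})L$, where $L=\lim\int_{\partial\Omega}|v_{n}|^{2_{*}}$; eliminating $L$ and invoking that $x_{0}$ minimizes $p/|Q|^{N-2}$ produces the dichotomy $b=0$ or $b\geq\frac{p(x_{0})}{(Q(x_{0}))^{N-2}}S_{1}^{N-1}$. Since $\Phi(u_{n})\rightarrow c$, the first identity reads $c=\Phi(u)+\frac{1}{2(N-1)}b$. If $b>0$, then $c\geq\Phi(u)+T$; but any critical point of $\Phi$ satisfies $\Phi(u)\geq0$ (which I would derive from $\Phi(u)=\Phi(u)-\frac12\langle\Phi'(u),u\rangle$ using (\ref{eqI2}) and $\lambda_{1}(a)>0$), contradicting $c<T$. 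Hence $b=0$, i.e.\ $\int_{\Omega}p|\nabla v_{n}|^{2}\rightarrow0$; as $p\geq p_{0}>0$ this gives $u_{n}\rightarrow u$ in $H^{1}(\Omega)$, so $\Phi(u)=c>0$, which forces $u\not\equiv0$. Thus $u$ is the desired nontrivial solution of (\ref{nouveau}).

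I expect the concentration step to be the main obstacle: one must rule out interior concentration (which is absent here because $\beta=0$ renders the interior nonlinearity subcritical and hence compact) and quantify exactly the boundary loss of compactness, matching it with the constant $T$. This is where a carefully localized trace inequality near the concentration point, exploiting the continuity of $p$ and $Q$ and the maximality property defining $x_{0}$, or equivalently a boundary concentration--compactness principle, is required. The auxiliary fact that every critical point of $\Phi$ has nonnegative energy must also be treated with care regarding the sign of the boundary integral.
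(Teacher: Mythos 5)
Your global strategy (mountain pass, boundedness from the Ambrosetti--Rabinowitz condition (\ref{eqI2}), Brezis--Lieb splitting $v_{n}=u_{n}-u$, a dichotomy for the limit $b$, comparison with the threshold $T$) coincides with the paper's, and your streamlining --- proving strong convergence first and reading off $u\not\equiv 0$ from $\Phi(u)=c>0$ --- is legitimate and would in fact subsume the paper's separate nontriviality step. However, the step you yourself call ``the main obstacle'' is precisely where your proposal has a genuine gap, and it is the heart of the theorem: the dichotomy $b=0$ or $b\geq\frac{p(x_{0})}{(Q(x_{0}))^{N-2}}S_{1}^{N-1}$. You derive it from the two inequalities $b\geq p(\bar{x})S_{1}L^{2/2_{*}}$ and $b\leq p(\bar{x})Q(\bar{x})L$ attached to ``a boundary concentration point $\bar{x}$''. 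This presupposes that all the boundary mass of $|v_{n}|^{2_{*}}$ concentrates at a single point, which you never establish; a Palais--Smale sequence may concentrate at several boundary points, and making such a localized argument rigorous requires a Struwe-type global compactness result or Lions' concentration-compactness, which you only name but do not carry out. The paper needs no localization at all: it invokes Zhu's sharp trace inequality with interior norm \cite{Z}, namely $\int_{\Omega}|\nabla w|^{2}dx+C(\Omega)\int_{\Omega}|w|^{k}dx\geq S_{1}\left(\int_{\partial\Omega}|w|^{2_{*}}ds_{x}\right)^{2/2_{*}}$ with $k$ subcritical, applied to $w_{j}=(p(x))^{1/2}v_{j}$; since $v_{j}\rightharpoonup 0$ and $H^{1}(\Omega)\hookrightarrow L^{k}(\Omega)$ is compact, the interior term is $o(1)$, and the pointwise bound $\frac{|Q(x)|^{N-2}}{p(x)}\leq\frac{(Q(x_{0}))^{N-2}}{p(x_{0})}$ on $\partial\Omega$ converts $\left(\int_{\partial\Omega}|p^{1/2}v_{j}|^{2_{*}}ds_{x}\right)^{2/2_{*}}$ into $\left[\frac{(p(x_{0}))^{1/(N-2)}}{Q(x_{0})}\right]^{2/2_{*}}\left(\int_{\partial\Omega}pQ|v_{j}|^{2_{*}}ds_{x}\right)^{2/2_{*}}$, giving $b\geq\left[\frac{(p(x_{0}))^{1/(N-2)}}{Q(x_{0})}\right]^{2/2_{*}}S_{1}\,b^{(N-2)/(N-1)}$ and hence the dichotomy with exactly the right constant. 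Without this (or a fully worked-out concentration analysis), your proof is incomplete at its decisive quantitative step.

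A secondary flaw lies in your mountain-pass geometry: from $\sup_{t\geq0}\Phi(tv_{0})<\infty$ you infer $\Phi(tv_{0})\rightarrow-\infty$, which is a non sequitur --- a function bounded above need not tend to $-\infty$, and a priori $\Phi(tv_{0})$ could stay in $[0,T)$ for all $t$, leaving you without an endpoint $e$ with $\Phi(e)\leq 0$ on that ray. The paper avoids this by taking the endpoint $v=t_{0}W_{\varepsilon,x_{0}}$ along the explicit trace instanton: under (\ref{eq3}) one has $\Phi(tW_{\varepsilon,x_{0}})\rightarrow-\infty$ directly, and under (\ref{eqI1}) the expansions (\ref{eq18})--(\ref{eqmars1}) show $\Phi(t_{0}W_{\varepsilon,x_{0}})\leq 0$ for suitable $t_{0}$ and small $\varepsilon$. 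On the positive side, you are right to flag that the auxiliary claim $\Phi(u)\geq 0$ at critical points is delicate because of the signs of $\int_{\Omega}G(x,u)dx$ and $\int_{\partial\Omega}pQ|u|^{2_{*}}ds_{x}$; the paper handles it by combining the two identities obtained from $\Phi(u)-\frac{1}{2}\langle\Phi'(u),u\rangle$ and $\Phi(u)-\frac{1}{2_{*}}\langle\Phi'(u),u\rangle$ together with (\ref{eqI2}) and $2<\alpha\leq 2_{*}$, and your write-up would need to do the same explicitly.
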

\begin{demo}~ \ref{th1}\\
Let $s=2_{*}$ when $f$ satisfies (\ref{eq3}) and $s=r$ when $f$ satisfies (\ref{eqI1}). By (\ref{eq2}) we have, for any $\varepsilon>0$, there is a
$\delta>0$ such that
\begin{equation*}
|g(x,u)|\leq \varepsilon |u|\quad\textrm{for a.e $x\in\Omega$,
and for all $ |u|\leq \delta$,}
\end{equation*}
thus, by (\ref{eq3}) or (\ref{eqI1}), we obtain
\begin{equation*}
|g(x,u)|\leq \varepsilon |u|+C |u|^{s-1}\quad\textrm{for a.e
$x\in\Omega$, and for all $ u\in \R$,}
\end{equation*}
and for some constant $C$ (depending on $\varepsilon$). Therefore,
we have
\begin{equation}
F(x,u)\leq \frac{1}{2} a(x)u^{2}+\frac{\varepsilon}{2}
u^{2}+\frac{C}{s}|u|^{s}\quad \textrm{for a.e $x\in\Omega$,
and for all $u\in\R$.} \label{eq6}
\end{equation}
Hence we find, for all $u\in H^{1}(\Omega)$,
$$\Phi(u)\geq \frac{1}{2}\int_{\Omega}\hspace{-2mm}p(x)|\nabla
u|^{2}dx-\frac{1}{2}
\int_{\Omega}\hspace{-2mm}a(x)|u|^{2}dx-\frac{\varepsilon}{2}
\int_{\Omega}\hspace{-2mm}|u|^{2}dx-\frac{C}{s}\int_{\Omega}\hspace{-2mm}|u|^{s}
dx-\frac{1}{2_{*}}\int_{\partial\Omega}\hspace{-2mm}p(x)Q(x)|u|^{2_{*}}ds_{x}$$
Using (\ref{eq4}) we easily see that, for
$\varepsilon>0$ small enough , there exist constants $k>0$,
$C_{1}>0$ and $C_{2}>0$ such that
\begin{eqnarray*}
\Phi(u)&\geq & k
\|u\|^{2}-C_{1}\|u\|^{s}-C_{2}\|u\|^{2_{*}}\\
&\geq &\|u\|^{2}\left(
k-C_{1}\|u\|^{s-2}-C_{2}\|u\|^{2_{*}-2}\right)\quad
\textrm{for all $u\in H^{1}$,}
\end{eqnarray*}
which implies, since $2_{*}>2$ and $s>2$, for some small $\alpha>0$ there exists $\rho>0$ such that
\begin{equation}
\Phi(u)\geq \rho,\quad \textrm{provided $\|u\|=\alpha$.}
\label{AR1}
\end{equation}
%%%%%%%%%%%%%
%%%%%%%%%%%%%%%%%%%%%%%%%%
%%%%%%%%%
At this stage, we need some notations and some estimations. We recall $S_{1}$ defined by
$$S_{1}=\inf\left\{\int_{\R^{N}_{+}}|\nabla u|^{2}dx;\,u\in
H^{1}(\R^{N}_{+}),\,\int_{\R^{N-1}}|u|^{2_{*}}dx=1\right\}$$ the best
constant for the trace embedding $H^{1}(\R^{N}_{+})$
into $L^{q}(\partial\R^{N}_{+})$, where
$R^{N}_{+}=\{x=(x',x_{N}):\,x'\in\R^{N-1},\,x_{N}>0\}$.\\
%%%%%%%%%%%%%%%%%%%%%%%%%%%
%%%%%%%%%%%%%%%%%%%%%%
We recall from \cite{E1} and \cite{L} that the minimizing functions of
$S_{1}$ are of the form
\begin{equation}
W(x)=\frac{\gamma_{N}}{\left[|x'|^{2}+\left(1+x_{N}\right)^{2}\right]^{\frac{N-2}{2}}},
\label{eqepsilon}
\end{equation}
 where $\gamma_{N}$ is
a positive constant depending on $N$.
%%%%%%%%%%%%%%%%%%%%%%%%%%
 We set
$$W_{\varepsilon,
x_{0}}(x)=\varepsilon^{-\frac{N-2}{2}}\phi(x)
W(\frac{x-x_{0}}{\varepsilon}),$$ where $x_{0}\in\partial\Omega$
and $\phi$ is a radial $C^{\infty}$-function such
that\begin{equation*}\phi(x)=\left\{\begin{array}{lll}
1&\textrm{if
$|x-x_{0}|\leq\frac{R}{4}$}\\
0&\textrm{if
$|x-x_{0}|>\frac{R}{2}$}\end{array}\right.\end{equation*}
with $R>0$ is a small constant.\\
From \cite{AY} and \cite{CY} we have the following estimates
 \begin{equation}
 \int_{\Omega}p(x)|\nabla W_{\varepsilon,x_{0}}|^{2}dx=p(x_{0})A_{1}- p(x_{0})H(x_{0})\left\{\begin{array}{lll}A'_{2}\varepsilon|\log{\varepsilon}|+o(\varepsilon|\log{\varepsilon}|)&\textrm{if $N=3$}\\[\medskipamount]
 A_{2}\varepsilon+o(\varepsilon) &\textrm{if $N\geq
 4$},\end{array}\right.\label{eq18}\end{equation}
\begin{equation}
\int_{\partial\Omega}p(x)Q(x)|W_{\varepsilon,x_{0}}|^{2_{*}}ds_{x}=p(x_{0})Q(x_{0})(B_{1}-H(x_{0})B_{2}\varepsilon)+o(\varepsilon)
\label{eq19}
\end{equation}
where
$A_{1}$, $A'_{2}$, $A_{2}$, $B_{1}$ and $B_{2}$ are some positive constants defined explicitly in \cite{AY}.\\
From \cite{Y1}, for some $2<r<2^{*}$, we have
\begin{equation}
\int_{\Omega}|W_{\varepsilon,x_{0}}|^{r}dx=\left\{\begin{array}{llllll}
o(\varepsilon) &\textrm{if $N\geq 4$}\\[\medskipamount]
o(\varepsilon|\ln(\varepsilon)|) &\textrm{If $N=3$.}
\end{array}
\right.
\label{eqmars1}
\end{equation}
\\Let
us notice that
\begin{equation}
S_{1}=\frac{A_{1}}{B_{1}^{\frac{2}{2_{*}}}}\quad\textrm{and}\quad A_{2}-\frac{2}{2_{*}}\frac{A_{1}B_{2}}{B_{1}}>0. \label{eqS1}
\end{equation}
%%%%%%%%%%%%%%%%%%%%%%%%%%%%%%%%%%%%%%%
%%%%%%%%%%%%%%%%%
%%%%%%%%%%%%%%%%%%
On the other hand, when $f$ satisfies (\ref{eq3}), we easily see that $\displaystyle \lim_{t\rightarrow +\infty} \Phi(tW_{\varepsilon,x_{0}})=-\infty$. Then we take $v=t_{0}W_{\varepsilon,x_{0}}$, where $t_{0}>0$ is chosen large enough so that $\|v\|>\alpha$ and $\Phi(v)\leq 0$.\\
When $f$ satisfies (\ref{eqI1}), using (\ref{eq18})-(\ref{eqmars1}), we have
\begin{eqnarray*}
\Phi(t W_{\varepsilon,x_{0}})=\displaystyle t^{2} A-t^{2_{*}} B+t^{r}\left\{\begin{array}{lll}
o(\varepsilon) &\textrm{if $N\geq 4$}\\[\medskipamount]
o(\varepsilon|\ln(\varepsilon)|) &\textrm{if $N=3$}.
\end{array}
\right.
\end{eqnarray*}
Therefore, for $\varepsilon>0$ small enough, there exists many $t_{0}>0$ such that $t_{0}^{2} A-t_{0}^{2_{*}} B <0$.
Let, again, $v=t_{0} W_{\varepsilon,x_{0}}$ for $\varepsilon$ small enough when $t_{0}$ is chosen large such that $\|v\| >\alpha$ and $\Phi(v)\leq 0$.\\
Set
 \begin{equation}
c=\inf_{\cal{P}\in \cal{A}}\max_{w\in\cal{P}}\Phi(w),\label{AR3}
 \end{equation}
where $\cal{A}$ denotes the class of continuous paths joining
 $0$ to $v$.\\
Thanks to a result of Ambrosetti and Rabinowtz \cite{AR}, see also \cite{BN}, there exists a sequence $\{u_{j}\}$ in $H^{1}(\Omega)$ such that
$$\Phi(u_{j})\rightarrow c\quad \textrm{and}\quad \Phi'(u_{j})\rightarrow 0 \,\,\textrm{in $H^{-1}(\Omega)$}.$$
Looking at (\ref{important}) we see that $c<\frac{1}{2(N-1)}\frac{p(x_{0})}{\left(Q(x_{0})\right)^{N-2}}
S_{1}^{N-1}.$
\end{demo}~\\
In order to conclude the proof of
Theorem~{\ref{th1}}, we need the following Lemma.
\begin{lemma}~\\
Let $\{u_{j}\}\subset H^{1}(\Omega)$ be a sequence satisfying
\begin{equation}
\Phi(u_{j})\rightarrow
c<\frac{p(x_{0})S_{1}^{N-1}}{2(N-1)(Q(x_{0}))^{N-2}} \label{eq7}
\end{equation}
and
\begin{equation}
\Phi'(u_{j})\rightarrow 0\quad\textrm{in}\,H^{-1}(\Omega) \label{eq8}
\end{equation}
then $\{u_{j}\}$ is relatively compact in $H^{1}(\Omega)$.
\label{lem1}
\end{lemma}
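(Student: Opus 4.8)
The plan is to adapt the local Palais--Smale analysis of Brezis--Nirenberg to the present critical \emph{trace} nonlinearity. First I would show that $\{u_j\}$ is bounded in $H^1(\Omega)$. Testing (\ref{eq8}) against $u_j$ and forming $\Phi(u_j)-\frac{1}{2_*}\langle\Phi'(u_j),u_j\rangle$ cancels the critical boundary integral and leaves
$$\Phi(u_j)-\frac{1}{2_*}\langle\Phi'(u_j),u_j\rangle=\left(\frac12-\frac{1}{2_*}\right)\int_{\Omega}p|\nabla u_j|^2\,dx-\int_{\Omega}\left(F(x,u_j)-\frac{1}{2_*}f(x,u_j)u_j\right)dx.$$
Writing $f=au+g$ as in (\ref{eq1}) and using the Ambrosetti--Rabinowitz inequality (\ref{eqI2}) to control $G-\frac{1}{2_*}gu$ from above, together with $a\in L^\infty(\Omega)$ and the coercivity furnished by $\lambda_1(a)>0$ in (\ref{eq4}), the right-hand side is bounded below by $k\|u_j\|^2$ minus lower-order terms, while the left-hand side equals $c+o(1)+o(\|u_j\|)$. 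Boundedness of $\{u_j\}$ follows.

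Passing to a subsequence, $u_j\rightharpoonup u$ in $H^1(\Omega)$. Since $2_*<2^*$, the exponent $2_*$ is subcritical for the embedding $H^1(\Omega)\hookrightarrow L^{2_*}(\Omega)$, so $u_j\to u$ strongly in $L^{2_*}(\Omega)$ (and in $L^s(\Omega)$), whence $\int_{\Omega}F(x,u_j)\to\int_{\Omega}F(x,u)$ and $\int_{\Omega}f(x,u_j)u_j\to\int_{\Omega}f(x,u)u$; letting $j\to\infty$ in (\ref{eq8}) gives $\Phi'(u)=0$, so $u$ is a solution of (\ref{nouveau}). Set $v_j=u_j-u\rightharpoonup 0$. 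Weak convergence yields $\int_{\Omega}p|\nabla u_j|^2=\int_{\Omega}p|\nabla u|^2+\int_{\Omega}p|\nabla v_j|^2+o(1)$, and the Brezis--Lieb lemma applied to the bounded-weight boundary integral gives $\int_{\partial\Omega}pQ|u_j|^{2_*}=\int_{\partial\Omega}pQ|u|^{2_*}+\int_{\partial\Omega}pQ|v_j|^{2_*}+o(1)$. Subtracting $\Phi(u)$ from the split form of $\Phi(u_j)$, and $\langle\Phi'(u),u\rangle=0$ from that of $\langle\Phi'(u_j),u_j\rangle$, and using the already-established convergence of the interior terms, I obtain
$$\int_{\Omega}p|\nabla v_j|^2\,dx-\int_{\partial\Omega}pQ|v_j|^{2_*}ds_x\to 0,$$
so along a further subsequence both quantities tend to a common limit $\ell\ge 0$.

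The decisive step is to prove $\ell=0$. Because $p,Q$ are continuous and $v_j\rightharpoonup 0$ can concentrate only on $\partial\Omega$ (there is no interior critical term, as $\beta=0$), the weighted trace inequality combined with the concentration--compactness principle gives, at any boundary concentration point $y$, the estimates $\int_{\Omega}p|\nabla v_j|^2\ge p(y)S_1\left(\int_{\partial\Omega}|v_j|^{2_*}\right)^{2/2_*}+o(1)$ and $\int_{\partial\Omega}pQ|v_j|^{2_*}\le p(y)Q(y)\int_{\partial\Omega}|v_j|^{2_*}+o(1)$. Feeding these into $\ell=\lim\int_{\Omega}p|\nabla v_j|^2=\lim\int_{\partial\Omega}pQ|v_j|^{2_*}$ and using $\frac{2_*}{2}-1=\frac{1}{N-2}$ forces the alternative: either $\ell=0$, or $\ell\ge\frac{p(y)}{(Q(y))^{N-2}}S_1^{N-1}\ge\frac{p(x_0)}{(Q(x_0))^{N-2}}S_1^{N-1}$, the last inequality being precisely the maximality that defines $x_0$. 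On the other hand the energy splitting reads $c=\Phi(u)+\left(\frac12-\frac{1}{2_*}\right)\ell=\Phi(u)+\frac{1}{2(N-1)}\ell$, and the same computation as in the boundedness step (now with $\langle\Phi'(u),u\rangle=0$) shows $\Phi(u)\ge 0$. If $\ell>0$ we would obtain $c\ge\frac{1}{2(N-1)}\frac{p(x_0)}{(Q(x_0))^{N-2}}S_1^{N-1}$, contradicting (\ref{eq7}). Hence $\ell=0$, i.e. $\int_{\Omega}p|\nabla v_j|^2\to 0$; since $p\ge p_0>0$ and $v_j\to 0$ in $L^2(\Omega)$, this gives $v_j\to 0$ in $H^1(\Omega)$, so $u_j\to u$ strongly.

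The main obstacle I anticipate lies in the concentration step: rigorously replacing the flat best constant $S_1$ by the weighted bounds involving the local values $p(y)$ and $Q(y)$, and verifying that the least admissible concentration level is attained at $x_0$ through the normalization $|Q|^{N-2}/p$. This demands a careful localization of the trace Sobolev inequality near $\partial\Omega$ and tight control of the errors generated by the variation of $p$ and $Q$ around the concentration point, for which the regularity assumptions (\ref{eqalphap})--(\ref{eqalphaq}) are designed.
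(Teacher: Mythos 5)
Your overall skeleton (bounded PS sequence, weak limit solves the problem, Br\'ezis--Lieb splitting of $v_j=u_j-u$, dichotomy on the concentrated energy $\ell$, contradiction with $c<\frac{1}{2(N-1)}\frac{p(x_0)}{(Q(x_0))^{N-2}}S_1^{N-1}$ using $\Phi(u)\ge0$) coincides with the paper's, but two steps do not go through as you wrote them. First, the boundedness argument: forming only $\Phi(u_j)-\frac{1}{2_*}\langle\Phi'(u_j),u_j\rangle$ and applying (\ref{eqI2}) leaves the term $\bigl(1-\frac{\alpha}{2_*}\bigr)\int_\Omega G(x,u_j)\,dx$, which is \emph{not} lower order when $\alpha<2_*$: under (\ref{eq3}) or (\ref{eqI1}) the primitive $G$ may grow like $|u|^{s}$ with $s$ up to $r<2^{*}$, so $\int_\Omega G(x,u_j)\,dx$ can be of size $\|u_j\|^{s}$ with $s>2$, swamping the coercive term $k\|u_j\|^2$. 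The paper's proof is engineered precisely to avoid this: it forms \emph{both} (\ref{eqlm1})$-\frac12$(\ref{eqlm3}) and (\ref{eqlm1})$-\frac{1}{2_*}$(\ref{eqlm3}) (equations (\ref{eq9}) and (\ref{eq9'})), applies (\ref{eqI2}) to each to obtain (\ref{eqsoleil1})--(\ref{eqsoleil2}), and then takes the combination $(\frac{\alpha}{2}-1)$(\ref{eqsoleil1})$+(1-\frac{\alpha}{2_*})$(\ref{eqsoleil2}), whose coefficients are chosen so that $\int_\Omega G$ cancels identically; only then does $2<\alpha\le 2_*$ give boundedness. The same defect recurs where you assert that ``the same computation'' yields $\Phi(u)\ge0$: the paper again needs the \emph{pair} of inequalities there, not a single one.

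Second, the decisive step $\ell=0$ is not actually carried out: the local inequality you invoke, $\int_\Omega p|\nabla v_j|^2\ge p(y)\,S_1\bigl(\int_{\partial\Omega}|v_j|^{2_*}ds_x\bigr)^{2/2_*}+o(1)$, is false as stated on a bounded domain (constant functions violate the unweighted version; $S_1$ is the half-space constant), and you yourself defer the repair to an ``anticipated obstacle.'' The paper closes exactly this gap without any concentration--compactness or localization, via Zhu's sharp trace inequality with interior norm \cite{Z}: $\int_\Omega|\nabla w|^2dx+C(\Omega)\int_\Omega|w|^{k}dx\ge S_1\bigl(\int_{\partial\Omega}|w|^{2_*}ds_x\bigr)^{2/2_*}$ with subcritical $k<2^{*}$, applied to $w_j=p^{1/2}v_j$; the correction term vanishes along the sequence by the compact embedding $H^{1}(\Omega)\hookrightarrow L^{k}(\Omega)$, and the weights are absorbed \emph{globally} through the pointwise bound $|Q(x)|^{N-2}/p(x)\le (Q(x_0))^{N-2}/p(x_0)$ on all of $\partial\Omega$, as in (\ref{equtile}) -- so no analysis at individual concentration points $y$ is needed. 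Relatedly, your closing remark misattributes the role of (\ref{eqalphap})--(\ref{eqalphaq}): these enter the test-function expansions (\ref{eq18})--(\ref{eq19}) used to verify (\ref{important}), not the compactness lemma. Finally, note that the paper's proof of the lemma also establishes $u\not\equiv0$ (needed for Theorem~\ref{th1}), a point your argument omits entirely.
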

{\bf{Proof of Lemma~{\ref{lem1}}:}}\\
We start by showing that $\{u_{j}\}$ is bounded in
$H^{1}(\Omega)$.\\
Using (\ref{eq1}) and (\ref{eq4}) we see that (\ref{eq7}) and (\ref{eq8}) are equivalent to
\begin{equation}
\frac{1}{2} \|u_{j}\|^{2}-\int_{\Omega}G(x,u_{j})
dx-\frac{1}{2_{*}}\int_{\partial\Omega}p(x)Q(x)|u_{j}|^{2_{*}}ds_{x}=c+o(1),
\label{eqlm1}
\end{equation}
and
\begin{equation}
\|u_{j}\|^{2}-\int_{\Omega}g(x,u_{j})u_{j}dx-\int_{\partial\Omega}p(x)Q(x)|u_{j}|^{2_{*}}ds_{x}=<\xi_{j},u_{j}>
\label{eqlm3}
\end{equation}
with $\xi_{j}\rightarrow 0$ in $H^{-1}$.\\
Taking (\ref{eqlm1})-$\frac{1}{2}$(\ref{eqlm3}), we get
\begin{equation}
\begin{array}{ll}
\displaystyle\frac{1}{2(N-1)}\int_{\partial\Omega}p(x)Q(x)|u_{j}|^{2_{*}}ds_{x}-\int_{\Omega}
G(x,u_{j})dx+\frac{1}{2}\int_{\Omega}\hspace{-2mm}g(x,u_{j})u_{j}
dx=c+o(\|u_{j}\|).
\end{array}
\label{eq9}
\end{equation}
On the other hand, (\ref{eqlm1})-$\frac{1}{2_{*}}$(\ref{eqlm3}) yields
\begin{equation}
\begin{array}{ll}
\displaystyle\frac{1}{2(N-1)}\|u_{j}\|^{2}-\int_{\Omega}
G(x,u_{j})dx+\frac{1}{2_{*}}\int_{\Omega}\hspace{-2mm}g(x,u_{j})u_{j}
dx=c+o(\|u_{j}\|).
\end{array}
\label{eq9'}
\end{equation}
Using (\ref{eqI2}), (\ref{eq9}) and (\ref{eq9'}) follow
\begin{equation}
\begin{array}{ll}
\displaystyle\frac{1}{2(N-1)}\|u_{j}\|^{2}-(1-\frac{\alpha}{2_{*}})\int_{\Omega}
G(x,u_{j})dx\leq c+o(\|u_{j}\|)
\end{array}
\label{eqsoleil1}
\end{equation}
and
\begin{equation}
\begin{array}{ll}
\displaystyle\frac{1}{2(N-1)}\int_{\partial\Omega}p(x)Q(x)|u_{j}|^{2_{*}}ds_{x}-(1-\frac{\alpha}{2})\int_{\Omega}
G(x,u_{j})dx\leq c+o(\|u_{j}\|).
\end{array}
\label{eqsoleil2}
\end{equation}
Computing $(\frac{\alpha}{2}-1)$(\ref{eqsoleil1})+$(1-\frac{\alpha}{2_{*}})$(\ref{eqsoleil2}), we obtain
\begin{equation*}
(\frac{\alpha}{2}-1)\|u_{j}\|^{2}+(1-\frac{\alpha}{2_{*}})\int_{\partial\Omega}p(x)Q(x)|u_{j}|^{2_{*}}ds_{x}\leq c+o(\|u_{j}\|).
\end{equation*}
Therefore, since $2<\alpha\leq 2_{*}$, we obtain that $\{u_{j}\}$ is bounded in $H^{1}(\Omega)$.\\
Extract a subsequence, still denoted by $u_{j}$, such that \basn
u_{j}&\rightharpoonup& u \quad\textrm{weakly in
$H^{1}(\Omega)$},\\[\medskipamount]
u_{j}&\rightarrow& u \quad\textrm{strongly in $L^{t}(\Omega)$ for all $t<2^{*}=\frac{2N}{N-2}$},\\[\medskipamount]
u_{j}&\rightarrow& u \quad\textrm{a.e. on $\Omega$},\\[\medskipamount]
%(u_{j}^{+})^{2_{*}-1}&\rightharpoonup& (u^{+})^{2_{*}-1}
%\quad\textrm{weakly in $(L^{2_{*}}(\Omega))^{*}$},\\[\medskipamount]
f(x,u_{j})&\rightarrow& f(x,u) \quad\textrm{strongly in
$L^{\frac{r}{r-1}}(\Omega)$},\\[\medskipamount]
u_{j}&\rightharpoonup& u \quad\textrm{weakly in
$L^{2_{*}}(\partial\Omega)$}. \easn
Passing to the limit in
(\ref{eq8}), we obtain
\begin{eqnarray*}
\left\{\begin{array}{ll} -\textsl{div}(p(x)\nabla u)=f(x,u) &\textrm {in $\Omega$}\\
\frac{\partial u}{\partial\nu}=Q(x)|u|^{2_{*}-2}u &\textrm{on $\partial\Omega$}
\end{array}
\right.
\end{eqnarray*}
We shall now verify that $u\not\equiv 0$. Indeed , suppose that
$u\equiv 0$. We claim that
$$\displaystyle \int_{\Omega}f(x,u_{j})u_{j}dx\rightarrow
0\quad\textrm{and\quad} \int_{\Omega}F(x,u_{j})dx\rightarrow
0.$$
From (\ref{eq3}) or (\ref{eqI1}), let $s=2_{*}$ if $f$ satisfies (\ref{eq3}) and $s=r$ if $f$ satisfies (\ref{eqI1}), we have
\begin{equation*}
\begin{array}{ll}
\quad\textrm{for some constants $C_{1}>0$ and $ C_{2}>0$}\\
|f(x,u)|\leq C_{1} |u|^{s-1}+C_{2}\quad \textrm{for a.e.
$x\in\Omega$, and for all $u\in \R$,}
\end{array}
\label{eq10}
\end{equation*}
and then
\begin{equation*}
|F(x,u)|\leq \frac{C_{1}}{s}|u|^{s}+C_{2} |u|\quad \textrm{for
a.e $x\in\Omega$, and for all $u\in \R$.}
\label{eq11}
\end{equation*}
Therefore
\begin{equation*}
\left|\int_{\Omega}f(x,u_{j})u_{j}dx\right|\leq
C_{1} \int_{\Omega}|u_{j}|^{s}dx+ C_{2}
\int_{\Omega}|u_{j}|dx
\end{equation*}
and
\begin{equation*}
\left|\int_{\Omega}F(x,u_{j})dx\right|\leq
\frac{C_{1}}{s} \int_{\Omega}|u_{j}|^{s}dx+ C_{2}
\int_{\Omega}|u_{j}|dx.
\end{equation*}
Since $u_{j}\rightarrow 0$ in $L^{s}(\Omega)$ then for $j$ large enough, we have
\begin{equation*}
\int_{\Omega}f(x,u_{j})u_{j}dx=o(1)
\end{equation*}
and
\begin{equation*}
\int_{\Omega}F(x,u_{j})dx=o(1).
\end{equation*}
Which gives the desired result.\\
Extracting a subsequence, still denoted by $u_{j}$, we may assume
that
\begin{equation}
\int_{\Omega}p(x)|\nabla u_{j}|^{2}dx\rightarrow l
\quad\textrm{for some constant $l\geq 0$.}
\label{eq12}
\end{equation}
Passing to the limit in (\ref{eqlm3}), we obtain
\begin{equation}
\int_{\partial\Omega}p(x)Q(x)|u_{j}|^{2_{*}}ds_{x}\rightarrow l.
\label{eq13}
\end{equation}
Passing to the limit in (\ref{eq9}), we easily get
\begin{equation}
\frac{1}{2(N-1)}l=c.
\label{eq14}
\end{equation}
Therefore $l>0$ and $\int_{\partial\Omega}p(x)Q(x)|u_{j}|^{2_{*}}ds_{x}>0$ for large $j$.\\
On the other hand, from the result of \cite[Theorem 02]{Z}, we know that there exists a constant $C(\Omega)>0$ such that for
every $w\in H^{1}(\Omega)$
\begin{equation*}
\int_{\Omega}|\nabla w|^{2}dx+C(\Omega)\int_{\Omega}|w|^{k}dx\geq
S_{1}\left(\int_{\partial\Omega}|w|^{2_{*}}ds_{x}\right)^{\frac{2}{2_{*}}},
\end{equation*}
with $k=\frac{2N}{N-1}$ if $N\geq 4$ and $k>3=\frac{2N}{N-1}$ if
$N=3$. \\
We apply this result for $w_{j}=(p(x))^{\frac{1}{2}}u_{j}$ and in
particular for $N=3$ we take $k$ such that $6=\frac{2N}{N-2}>k>3$,
we obtain for $j$ large enough
\begin{equation*}
\int_{\Omega}|\nabla
(p(x))^{\frac{1}{2}}u_{j}|^{2}dx+C(\Omega)\int_{\Omega}|(p(x))^{\frac{1}{2}}u_{j}|^{k}dx\geq
S_{1}\left(\int_{\partial\Omega}|(p(x))^{\frac{1}{2}}u_{j}|^{2_{*}}ds_{x}\right)^{\frac{2}{2_{*}}}
\end{equation*}
Since $k<\frac{2N}{N-2}$ for every $N\geq 3$, thanks to the
compact embedding $H^{1}(\Omega)\hookrightarrow L^{k}(\Omega)$, we
have, for a subsequence, $u_{j}\rightarrow 0$ strongly in $L^{k}(\Omega)$ and we
deduce
\begin{equation}
\int_{\Omega}p(x)|\nabla u_{j}|^{2}dx+o(1)\geq
S_{1}\left(\int_{\partial\Omega}|(p(x))^{\frac{1}{2}}u_{j}|^{2_{*}}ds_{x}\right)^{\frac{2}{2_{*}}}+o(1).
\label{eq15}
\end{equation}
Using the fact that
\begin{equation*}
\frac{|Q(x)|^{N-2}}{p(x)}\leq \frac{(Q(x_{0}))^{N-2}}{p(x_{0})}
\qquad \forall x\in\partial\Omega,
\end{equation*}
(\ref{eq15}) becomes
\begin{equation}
\begin{array}{lll}
\displaystyle\int_{\Omega}p(x)|\nabla u_{j}|^{2}dx+o(1)&\geq&
S_{1}\left(\displaystyle
\int_{\partial\Omega}(p(x))^{\frac{2_{*}}{2}}\left(\frac{\frac{|Q(x)|^{N-2}}{p(x)}}{\frac{(Q(x_{0}))^{N-2}}{p(x_{0})}}\right)^{\frac{1}{N-2}}|u_{j}|^{2_{*}}ds_{x}\right)^{\frac{2}{2_{*}}}+o(1)\\[\medskipamount]
&\geq & \displaystyle
S_{1}\left[\frac{\left(p(x_{0})\right)^{\frac{1}{N-2}}}{Q(x_{0})}\right]^{\frac{2}{2_{*}}}\left(\int_{\partial\Omega}p(x)|Q(x)||u_{j}|^{2_{*}}ds_{x}\right)^{\frac{2}{2_{*}}}+o(1)\\[\medskipamount]
&\geq & \displaystyle
S_{1}\left[\frac{\left(p(x_{0})\right)^{\frac{1}{N-2}}}{Q(x_{0})}\right]^{\frac{2}{2_{*}}}\left(\int_{\partial\Omega}p(x)Q(x)|u_{j}|^{2_{*}}ds_{x}\right)^{\frac{2}{2_{*}}}+o(1).
\end{array}
\label{equtile}
\end{equation}
At the limit we obtain
$$l\geq \left[\frac{\left(p(x_{0})\right)^{\frac{1}{N-2}}}{Q(x_{0})}\right]^{\frac{2}{2_{*}}} S_{1}l^{\frac{N-2}{N-1}}$$
and
$$ l\geq \frac{\left(p(x_{0})\right)^{\frac{1}{N-1}}}{\left(Q(x_{0})\right)^{\frac{N-2}{N-1}}}S_{1}l^{\frac{N-2}{N-1}}.$$
Using (\ref{AR3}) and (\ref{eq14}) we see that $l\not\equiv 0$ and
$$l^{\frac{1}{N-1}}\geq
\frac{\left(p(x_{0})\right)^{\frac{1}{N-1}}}{\left(Q(x_{0})\right)^{\frac{N-2}{N-1}}}S_{1}.$$
Therefore
$$l\geq \frac{p(x_{0})}{\left(Q(x_{0})\right)^{N-2}} S_{1}^{N-1}$$
and from (\ref{eq14}) we have
$$c\geq \frac{1}{2(N-1)}\frac{p(x_{0})}{\left(Q(x_{0})\right)^{N-2}} S_{1}^{N-1}$$
which gives a contradiction with the fact that
$c<\frac{1}{2(N-1)}\frac{p(x_{0})}{\left(Q(x_{0})\right)^{N-2}}
S_{1}^{N-1}$, thus $u\not\equiv 0$.
Now, we shall prove, for a subsequence, that $u_{j}\rightarrow u$ strongly in $H^{1}(\Omega).$\\
We start by showing that $\Phi(u)\geq 0$. Indeed, since $u$ is a solution of (\ref{nouveau}) with $\beta=0$, we have
$$\int_{\Omega}p(x)|\nabla u|^{2}dx=\int_{\Omega}f(x,u)dx+\int_{\partial\Omega}p(x)Q(x)|u|^{2_{*}}ds_{x}.$$
On the other hand
$$\Phi(u)=\frac{1}{2}\int_{\Omega}p(x)|\nabla u|^{2}dx-\frac{1}{2_{*}}\int_{\partial\Omega}p(x)Q(x)|u|^{2_{*}}ds_{x}-\int_{\Omega}F(x,u)dx.$$
Therefore, using (\ref{eqI2}), we have
$$\Phi(u)\geq \frac{1}{2(N-1)}\int_{\Omega}p(x)|\nabla u|^{2}dx+(\frac{\alpha}{2_{*}}-1)\int_{\Omega}F(x,u)dx,$$
and
$$\Phi(u)\geq \frac{1}{2(N-1)} \int_{\partial\Omega}p(x)Q(x)|u|^{2_{*}}ds_{x}+(\frac{\alpha}{2}-1)\int_{\Omega}F(x,u)dx.$$
Since $2<\alpha\leq 2_{*}$, we deduce that $\phi(u)\geq 0$.\\
We set $v_{j}=u_{j}-u$. We have
\begin{equation}
\int_{\Omega}p(x)|\nabla u_{j}|^{2}dx=\int_{\Omega}p(x)|\nabla u|^{2}dx+\int_{\Omega}p(x)|\nabla v_{j}|^{2}dx+o(1)
\label{eqlundi1}
\end{equation}
and from \cite{BL} we deduce that
\begin{equation}
\int_{\partial\Omega}p(x)Q(x)|u_{j}|^{2_{*}}ds_{x}=\int_{\partial\Omega}p(x)Q(x)|u|^{2_{*}}ds_{x}+\int_{\partial\Omega}p(x)Q(x)|v_{j}|^{2_{*}}ds_{x}+o(1).
\label{eqlundi2}
\end{equation}
Inserting (\ref{eqlundi1}) and (\ref{eqlundi2}) into (\ref{eqlm1}) and (\ref{eqlm3}) we get
\begin{equation}
\Phi(u)+\frac{1}{2}\int_{\Omega}p(x)|\nabla v_{j}|^{2}dx-\frac{1}{2_{*}}\int_{\partial\Omega}p(x)Q(x)|v_{j}|^{2_{*}}ds_{x}=c+o(1)
\label{eqlundi3}
\end{equation}
and (looking at (\ref{eq8}))
\begin{equation}
\int_{\Omega}p(x)|\nabla v_{j}|^{2}dx-\int_{\partial\Omega}p(x)Q(x)|v_{j}|^{2_{*}}ds_{x}=o(1).
\label{eqlundi4}
\end{equation}
Extracting a subsequence, still denoted by $u_{j}$, we may assume that
$$\int_{\Omega}p(x)|\nabla v_{j}|^{2}dx\rightarrow l\quad\textrm{for some constant $l\geq 0$.}$$
From (\ref{eqlundi4}) we obtain
$$\int_{\partial\Omega}p(x)Q(x)|v_{j}|^{2}ds_{x}=l.$$
Passing to the limit in (\ref{eqlundi3}), we easily see that
\begin{equation}
\frac{1}{2(N-1)}l=c-\Phi(u).
\label{eqlundi5}
\end{equation}
Using the Sobolev embedding, see (\ref{equtile}) for details, we have
\begin{equation}
l\geq \frac{(p(x_{0}))^{\frac{1}{N-2}}}{(Q(x_{0}))^{\frac{N-2}{N-1}}}S_{1} l^{\frac{N-2}{N-1}}.
\label{eqlundi6}
\end{equation}
We claim that $l=0$. Indeed, arguing by contradiction, assuming that $l\not=0$, then (\ref{eqlundi6}) gives
$$l\geq \frac{p(x_{0})}{(Q(x_{0}))^{N-2}}S_{1}^{N-1}.$$
From (\ref{eqlundi5}), we obtain
$$c-\Phi(u)\geq \frac{1}{2(N-1)} \frac{p(x_{0})}{(Q(x_{0}))^{N-2}}S_{1}^{N-1}$$
which gives a contradiction, since $c<\frac{1}{2(N-1)} \frac{(p(x_{0})}{(Q(x_{0}))^{N-2}}S_{1}^{N-1}$ and $\Phi(u)\geq 0$. Therefore $l=0$, $c=\Phi(u)$ and $u_{j}\rightarrow u$ strongly in $H^{1}(\Omega)$.
%%%%%%%%%%%%
%%%%%%%%%%%%%%%%%%%%%%%%%%%
\subsection{Sufficient conditions on $f(x,u)$ which give condition (\ref{important}):}
We claim that $W_{\varepsilon,x_{0}}$ satisfies condition
(\ref{important}) for $\varepsilon>0$ sufficiently small. Indeed, we have
$$\Phi(tW_{\varepsilon,x_{0}})=\frac{1}{2}t^{2}\int_{\Omega}p(x)|\nabla
W_{\varepsilon,x_{0}}|^{2}dx-\frac{t^{2_{*}}}{2_{*}}\int_{\partial\Omega}p(x)Q(x)|W_{\varepsilon,x_{0}}|^{2_{*}}ds_{x}
-\int_{\Omega}F(x,tW_{\varepsilon,x_{0}})dx.$$
When $f$ satisfies (\ref{eq3}), we easily see that $\displaystyle\lim_{t\rightarrow
+\infty}\Phi(tW_{\varepsilon,x_{0}})=-\infty$ and for large $t_{0}>0$ we have $\Phi(t_{0}W_{\varepsilon,x_{0}})<0$.\\
When $f$ satisfies (\ref{eqI1}), using (\ref{eq18}), (\ref{eq19})  and (\ref{eqmars1}), we have
\begin{eqnarray*}
\Phi(t W_{\varepsilon,x_{0}})=\displaystyle t^{2} A-t^{2_{*}} B+t^{r}\left\{
\begin{array}{lll}
o(\varepsilon) &\textrm{if $N\geq 4$}\\[\medskipamount]
o(\varepsilon|\ln(\varepsilon)|) &\textrm{if $N=3$}.
\end{array}
\right.
\end{eqnarray*}
Therefore, for $\varepsilon>0$ small enough, we chose $t_{0}>0$ such that $t_{0}^{2} A-t_{0}^{2_{*}} B <0$ and $\Phi(t_{0} W_{\varepsilon,x_{0}})<0$.
Therefore, in both cases, $\sup_{t\in [0,\,1]}\Phi(t\, t_{0}W_{\varepsilon,x_{0}})$ is achieved at some $0\leq\tilde{t}_{\varepsilon}\leq 1$ and $\tilde{t}_{\varepsilon}$ is bounded. In the rest of this section, we note $t_{\varepsilon}=\tilde{t}_{\varepsilon} t_{0}$.\\
From now, we can suppose that $t_{\varepsilon}>0$, indeed if $t_{\varepsilon}=0$ then
$\sup_{t\geq0}\Phi(tW_{\varepsilon,x_{0}})=0$ and the condition
(\ref{important}) is satisfied.\\
Since the derivative of the function $t\rightarrow \Phi(tW_{\varepsilon,x_{0}})$ vanishes at
$t_{\varepsilon}$ we have
\begin{equation}
t_{\varepsilon}\int_{\Omega}p(x)|\nabla
W_{\varepsilon,x_{0}}|^{2}dx-t_{\varepsilon}^{2_{*}-1}\int_{\partial\Omega}p(x)Q(x)|W_{\varepsilon,x_{0}}|^{2_{*}}ds_{x}
-\int_{\Omega}f(x,t_{\varepsilon}W_{\varepsilon,x_{0}})W_{\varepsilon,x_{0}}dx=0.
\label{eq21}
\end{equation}
We claim that
\begin{equation}
\int_{\Omega}\frac{f(x,t_{\varepsilon}W_{\varepsilon,x_{0}})W_{\varepsilon,x_{0}}}{t_\varepsilon}dx\rightarrow 0\,\,
\textrm{as\,\,$\varepsilon\rightarrow 0$}. \label{eq26}
\end{equation}
Indeed, from (\ref{eq21}), we have
\begin{equation*}
\int_{\Omega}p(x)|\nabla
W_{\varepsilon,x_{0}}|^{2}dx-t_{\varepsilon}^{2_{*}-2}\int_{\partial\Omega}p(x)Q(x)|W_{\varepsilon,x_{0}}|^{2_{*}}ds_{x}
-\int_{\Omega}\frac{f(x,t_{\varepsilon}W_{\varepsilon,x_{0}})W_{\varepsilon,x_{0}}}{t_\varepsilon}dx=0.
\label{eq25}
\end{equation*}
Using (\ref{eq1})-(\ref{eq3}) or (\ref{eq1})-(\ref{eq2}) and (\ref{eqI1}), there are $C_{1}>0$ and $C_{2}>0$
such that, for a.e. $x\in\Omega$, for all $u\in \R$, $|f(x,u)|\leq C_{1} |u|^{s-1}+C_{2} |u|$ where $s=2_{*}$ if $f$ satisfies (\ref{eq3}) and $s=r$ if $f$ satisfies (\ref{eqI1}).\\
Therefore
$$\int_{\Omega}\frac{f(x,t_{\varepsilon}W_{\varepsilon,x_{0}})W_{\varepsilon,x_{0}}}{t_\varepsilon}dx\leq
C_{1} {t_{\varepsilon}}^{s-2}{\|W_{\varepsilon,x_{0}}\|}_{L^{s}}^{s}+C_{2}
\|W_{\varepsilon,x_{0}}\|_{L^2}^{2},$$
Using the fact that, as $\varepsilon\rightarrow 0$, $t_{\varepsilon}$ is
bounded, $\|W_{\varepsilon,x_{0}}\|_{L^2(\Omega)}\rightarrow 0$ and $\|W_{\varepsilon,x_{0}}\|_{L^{s}(\Omega)}\rightarrow 0$ since $s<2^{*}$, we get directly (\ref{eq26}).\\
Consequently, for $\varepsilon>0$ small enough, (\ref{eq21}) become
\begin{equation*}
t_{\varepsilon}\int_{\Omega}p(x)|\nabla
W_{\varepsilon,x_{0}}|^{2}dx-t_{\varepsilon}^{2_{*}-1}\int_{\partial\Omega}p(x)Q(x)|W_{\varepsilon,x_{0}}|^{2_{*}}ds_{x}=o(1).
\end{equation*}
Therefore
\begin{equation}
t_{\varepsilon}\leq\left(\frac{\displaystyle\int_{\Omega}p(x)|\nabla
W_{\varepsilon,x_{0}}|^{2}dx}{\displaystyle\int_{\partial\Omega}p(x)Q(x)|W_{\varepsilon,x_{0}}|^{2_{*}}ds_{x}}\right)^{\frac{1}{2_{*}-2}}+o(1).
\label{eq22}
\end{equation}
Set
$$X_{\varepsilon}=\left(\frac{\displaystyle\int_{\Omega}p(x)|\nabla
W_{\varepsilon,x_{0}}|^{2}dx}{\displaystyle\int_{\partial\Omega}p(x)Q(x)|W_{\varepsilon,x_{0}}|^{2_{*}}ds_{x}}\right)^{\frac{1}{2_{*}-2}}$$
and
$$M_{\varepsilon}=\sup_{t\in [0,\,1]}\Phi(t\,t_{0}W_{\varepsilon,x_{0}})=\Phi(t_{\varepsilon}W_{\varepsilon,x_{0}}).$$
Since the function
$t\rightarrow\frac{1}{2}t^{2}\int_{\Omega}p(x)|\nabla
W_{\varepsilon,x_{0}}|^{2}dx-\frac{t^{2_{*}}}{2_{*}}\int_{\partial\Omega}p(x)Q(x)|W_{\varepsilon,x_{0}}|^{2_{*}}ds_{x}$
is increasing on the interval $[0,X_{\varepsilon}]$ we have, by
(\ref{eq22}),
\begin{equation*}
M_{\varepsilon}\leq\frac{1}{2}X_{\varepsilon}^{2}\int_{\Omega}p(x)|\nabla
W_{\varepsilon,x_{0}}|^{2}dx
-\frac{X_{\varepsilon}^{2_{*}}}{2_{*}}\int_{\partial\Omega}p(x)Q(x)|W_{\varepsilon,x_{0}}|^{2_{*}}ds_{x}-\int_{\Omega}F(x,t_{\varepsilon}W_{\varepsilon,x_{0}})dx.
\end{equation*}
Using (\ref{eq18})-(\ref{eqS1}) and the fact that $\int_{\Omega}|W_{\varepsilon,x_{0}}|^{2}dx=o(\varepsilon)$, we obtain
\begin{equation}
\begin{array}{lll}
M_{\varepsilon}&\leq&
\frac{1}{2(N-1)}\frac{p(x_{0})}{\left(Q(x_{0})\right)^{N-2}}
S_{1}^{N-1}-\int_{\Omega}G(x,t_{\varepsilon}W_{\varepsilon,x_{0}})dx\\[\medskipamount]
&-&\left\{\begin{array}{lll}H(x_{0})\left(\frac{A_{1}}{Q(x_{0})B_{1}}\right)^{\frac{2}{2_{*}-2}}A'_{2}\varepsilon|\log{\varepsilon}|+o(\varepsilon|\log{\varepsilon}|)&\textrm{if
$N=3$,}\\[\medskipamount]
H(x_{0})\frac{p(x_{0})}{2}\left(A_{2}-\frac{2}{2_{*}}\frac{A_{1}B_{2}}{B_{1}}\right)\varepsilon+o(\varepsilon)&\textrm{if
$N\geq 4$}.
\end{array}\right.
\label{eq23}
\end{array}
\end{equation}
%%%%%%%%%%%%%%%%%%%%
%%%%%%%%%%%%%
At this stage, we distinguish two cases:\\
{\bf{When $H(x_{0})\leq 0$.}}\\
\begin{lemma}~\\
\label{lm2}
Assume that $f(x,\,u)$ satisfies (\ref{eq1})-(\ref{eq3}) and (\ref{eq4}) or (\ref{eq1})-(\ref{eq2}) and (\ref{eqI1})-(\ref{eq4}). Suppose
that there exists some continuous function $g(.)$ such that
\begin{equation}
g(x,u)\geq g(u)\,\,\textrm{ for a.e. $x\in \Omega$ and
for all $u\in \R$} \label{eq16}
\end{equation}
and the primitive $G(u)=\int_{0}^{u}g(t)dt$ satisfies, for $N\geq
4$
\begin{equation}
\lim_{\varepsilon\rightarrow 0}\varepsilon^{\frac{N-2}{2}}\int_{\varepsilon^{\frac{1}{2}}}^{+\infty}t^{N-1}
\int_{0}^{+\infty}G\left(\frac{t^{-(N-2)}}{(1+r^{2})^{\frac{(N-2)}{2}}}
\right)r^{N-2}drdt=+\infty. \label{eq17}
\end{equation}
and for $N=3$
\begin{equation}
\displaystyle\lim_{\varepsilon\rightarrow
0}\frac{\varepsilon^{\frac{1}{2}}}{|\ln(\varepsilon)|}\int_{\varepsilon^{\frac{1}{2}}}^{+\infty}t^{2}
\int_{0}^{+\infty}G\left(\frac{t^{-1}}{(1+r^{2})^{\frac{1}{2}}}
\right)rdrdt=+\infty.
\label{eq17''}
\end{equation}
Then condition (\ref{important}) holds.
\label{lemma1}
\end{lemma}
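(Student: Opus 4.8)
The plan is to upgrade the bound (\ref{eq23}) on $M_\varepsilon=\sup_{t\in[0,1]}\Phi(t\,t_0W_{\varepsilon,x_0})$ into a strict inequality below the critical level $\frac{1}{2(N-1)}\frac{p(x_0)}{(Q(x_0))^{N-2}}S_1^{N-1}$ by showing that the nonlinear term $-\int_\Omega G(x,t_\varepsilon W_{\varepsilon,x_0})\,dx$ is negative and of strictly larger order than the mean-curvature term, which for $H(x_0)\le 0$ has turned unfavourable. Since $t\mapsto\Phi(tW_{\varepsilon,x_0})$ attains its maximum over $[0,+\infty)$ inside $[0,t_0]$ once $t_0$ is large, one has $\sup_{t\ge 0}\Phi(tW_{\varepsilon,x_0})=M_\varepsilon$, so establishing $M_\varepsilon<\frac{1}{2(N-1)}\frac{p(x_0)}{(Q(x_0))^{N-2}}S_1^{N-1}$ is exactly condition (\ref{important}) with $v_0=W_{\varepsilon,x_0}$.

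First I would invoke hypothesis (\ref{eq16}): integrating $g(x,\tau)\ge g(\tau)$ from $0$ to $u\ge 0$ gives $G(x,u)\ge G(u)$, and as $W_{\varepsilon,x_0}\ge 0$ and $t_\varepsilon>0$ this lets me replace $\int_\Omega G(x,t_\varepsilon W_{\varepsilon,x_0})\,dx$ by the smaller $\int_\Omega G(t_\varepsilon W_{\varepsilon,x_0})\,dx$ in (\ref{eq23}). Next, since $H(x_0)\le 0$ while $A_2-\frac{2}{2_{*}}\frac{A_1B_2}{B_1}>0$ by (\ref{eqS1}) and $A_2',p(x_0)>0$, the curvature contribution in (\ref{eq23}) is now a nonnegative obstruction bounded above by $C|H(x_0)|\,\sigma_\varepsilon$, where
$$\sigma_\varepsilon=\begin{cases}\varepsilon & \text{if }N\ge 4,\\ \varepsilon|\log\varepsilon| & \text{if }N=3.\end{cases}$$
Everything then reduces to proving that $\frac{1}{\sigma_\varepsilon}\int_\Omega G(t_\varepsilon W_{\varepsilon,x_0})\,dx\to+\infty$.

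The core is the asymptotic evaluation of this integral. Flattening $\partial\Omega$ near $x_0$ so that locally $x_0=0$ and $\Omega\approx\{x_N>0\}$, and using that $\phi\equiv 1$ on the concentration scale $\varepsilon\ll R$, I would write $W_{\varepsilon,x_0}(x)=\gamma_N\varepsilon^{\frac{N-2}{2}}\left[|x'|^2+(\varepsilon+x_N)^2\right]^{-\frac{N-2}{2}}$ and perform the successive substitutions $s=\varepsilon+x_N$, $|x'|=sr$, and $s=\varepsilon^{1/2}t$. Up to errors of order $o(\sigma_\varepsilon)$ coming from the truncation and the boundary curvature, this turns $\int_\Omega G(t_\varepsilon W_{\varepsilon,x_0})\,dx$ into
$$\omega_{N-2}\,\varepsilon^{\frac{N}{2}}\int_{\varepsilon^{1/2}}^{+\infty}t^{N-1}\int_0^{+\infty}G\!\left(\frac{t_\varepsilon\gamma_N\,t^{-(N-2)}}{(1+r^2)^{\frac{N-2}{2}}}\right)r^{N-2}\,dr\,dt,$$
where $\omega_{N-2}$ is the measure of the unit sphere of $\R^{N-1}$. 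Dividing by $\sigma_\varepsilon$ yields precisely the prefactor $\varepsilon^{\frac{N-2}{2}}$ of (\ref{eq17}) when $N\ge 4$, and $\varepsilon^{1/2}/|\ln\varepsilon|$ of (\ref{eq17''}) when $N=3$, so the hypotheses (\ref{eq17}) and (\ref{eq17''}) force the limit to be $+\infty$. The factor $t_\varepsilon$ is harmless: from (\ref{eq21}), (\ref{eq26}) and (\ref{eq22}) one gets $t_\varepsilon\to\left(A_1/(Q(x_0)B_1)\right)^{\frac{N-2}{2}}>0$, so $t_\varepsilon$ stays in a fixed compact subset of $(0,+\infty)$ and the resulting positive constant inside the argument of $G$ does not affect the growth measured by (\ref{eq17})/(\ref{eq17''}).

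Combining the three steps,
$$\frac{1}{\sigma_\varepsilon}\left(M_\varepsilon-\frac{1}{2(N-1)}\frac{p(x_0)}{(Q(x_0))^{N-2}}S_1^{N-1}\right)\le -\frac{1}{\sigma_\varepsilon}\int_\Omega G(t_\varepsilon W_{\varepsilon,x_0})\,dx+C|H(x_0)|+o(1)\longrightarrow-\infty,$$
so $M_\varepsilon$ lies strictly below the critical level for all small $\varepsilon$, which is (\ref{important}). I expect the main obstacle to be the asymptotic evaluation in the previous paragraph: one must check that flattening the boundary, the cutoff $\phi$, and replacing $t_\varepsilon$ by its limit each contribute only $o(\sigma_\varepsilon)$, so that the divergence encoded in (\ref{eq17}) and (\ref{eq17''}) genuinely dominates the adverse curvature term; the remaining manipulations are routine bookkeeping with the constants of (\ref{eq18})--(\ref{eqS1}).
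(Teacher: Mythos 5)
Your proposal is correct and follows essentially the same route as the paper's own proof: it inserts the pointwise bound $G(x,u)\geq G(u)$ coming from (\ref{eq16}) into the expansion (\ref{eq23}), then rescales the resulting integral (the same chain of substitutions, polar coordinates in $x'$ and $s=\varepsilon^{1/2}t$) to recognize exactly the quantities in (\ref{eq17}) and (\ref{eq17''}), so that the divergence hypothesis dominates the adverse curvature term of order $\varepsilon$ (resp.\ $\varepsilon|\log\varepsilon|$). The only differences are cosmetic: you track the constant $t_{\varepsilon}\gamma_{N}$ inside $G$ explicitly, whereas the paper absorbs it into an unspecified constant $A$ in (\ref{eq27}), and you flag the truncation/flattening errors as $o(\sigma_{\varepsilon})$, a point the paper treats with the same (or less) care.
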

\begin{proof}~\\
From (\ref{eq16}) and (\ref{eqepsilon}), for $\varepsilon>0$
sufficiently small, we have
\begin{equation}
\displaystyle\int_{\Omega}G(x,t_{\varepsilon}W_{\varepsilon,x_{0}})dx\geq\displaystyle
\int_{\Omega}G\Big(\frac{A\varepsilon^{\frac{N-2}{2}}}{\left[\left(\varepsilon+x_{N}\right)^{2}+|x'-x'_{0}|^{2}\right]^{\frac{N-2}{2}}}\Big)dx
\label{eq27}
\end{equation}
for some constant $A>0$.\\
Inserting (\ref{eq27}) into (\ref{eq23})
we write
\begin{equation}
\begin{array}{ll}
M{\varepsilon}&\leq
\displaystyle\frac{1}{2(N-1)}\frac{p(x_{0})}{\left(Q(x_{0})\right)^{N-2}}
S_{1}^{N-1}-\int_{\Omega}G\Big(\frac{A\varepsilon^{\frac{N-2}{2}}}{\left[\left(\varepsilon+(x_{N}-x_{0N})\right)^{2}+|x'-x'_{0}|^{2}\right]^{\frac{N-2}{2}}}\Big)dx\\[\medskipamount]
 &-\left\{\begin{array}{lll}H(x_{0})\left(\frac{A_{1}}{Q(x_{0})B_{1}}\right)^{\frac{2}{2_{*}-2}}A'_{2}\varepsilon|\log{\varepsilon}|+o(\varepsilon|\log{\varepsilon}|)&\textrm{if
$N=3$,}\\[\medskipamount]
H(x_{0})\frac{p(x_{0})}{2}\left(A_{2}-\frac{2}{2_{*}}\frac{A_{1}B_{2}}{B_{1}}\right)\varepsilon+o(\varepsilon)&\textrm{if
$N\geq 4$}.
\end{array}\right.
\end{array}
\label{eq28}
\end{equation}
Finally, we claim that
\begin{equation}
\lim_{\varepsilon\rightarrow 0}\frac{1}{\varepsilon}
\int_{\Omega}G\Big(\frac{A\varepsilon^{\frac{N-2}{2}}}{\left[\left(\varepsilon+x_{N}\right)^{2}+|x'-x'_{0}|^{2}\right]^{\frac{N-2}{2}}}\Big)dx=+\infty\,\,\textrm{If $N\geq 4$,}
\label{eq29}
\end{equation}
and
\begin{equation}
\lim_{\varepsilon\rightarrow
0}\frac{1}{\varepsilon|\log{\varepsilon}|}
\int_{\Omega}G\Big(\frac{A\varepsilon^{\frac{1}{2}}}{\left[\left(\varepsilon+x_{N}\right)^{2}+|x'-x'_{0}|^{2}\right]^{\frac{N-2}{2}}}\Big)dx=+\infty\,\,\textrm{If $N=3$}.
\label{eq30}
\end{equation}
which implies, together with (\ref{eq28}), that
$M_{\varepsilon}<\frac{1}{2(N-1)}\frac{p(x_{0})}{\left(Q(x_{0})\right)^{N-2}}
S_{1}^{N-1}$ for $\varepsilon>0$ sufficiently small.\\
{\bf{Verification of (\ref{eq29}) and (\ref{eq30})}:}\\
\begin{equation*}
\begin{array}{lllll}
\displaystyle
\int_{\Omega}G\Big(\frac{A\varepsilon^{\frac{N-2}{2}}}{\left[\left(\varepsilon+x_{N}\right)^{2}+|x'-x'_{0}|^{2}\right]^{\frac{N-2}{2}}}\Big)dx=
\displaystyle\varepsilon^{N}\int_{\R^{N}_{+}}G\Big(\frac{A\varepsilon^{-\frac{N-2}{2}}}{\left[\left(1+y_{N}\right)^{2}+|y'|^{2}\right]^{\frac{N-2}{2}}}\Big)dy+O(1)\\[\bigskipamount]
\hspace{40mm}=\displaystyle\varepsilon^{N}\omega\int_{0}^{+\infty}\int_{0}^{+\infty}(1+y_{N})^{N-1}G\Big(\frac{1}{\left(1+y_{N}\right)^{N-2}}\frac{A\varepsilon^{-\frac{N-2}{2}}}{\left[1+r^{2}\right]^{\frac{N-2}{2}}}\Big)r^{N-2}drdy_{N}
\end{array}
\end{equation*}
where $\omega$ is the area of sphere $S^{N-2}$.\\
Using the change of variable $t=\varepsilon^{\frac{1}{2}}(1+y_{N})$ we get
\begin{eqnarray*}
\begin{array}{lllll}
\displaystyle
\int_{\Omega}G\Big(\frac{A\varepsilon^{\frac{N-2}{2}}}{\left[\left(\varepsilon+x_{N}\right)^{2}+|x'-x'_{0}|^{2}\right]^{\frac{N-2}{2}}}\Big)dx=\displaystyle\varepsilon^{\frac{N}{2}}\omega\int_{\varepsilon^{\frac{1}{2}}}^{+\infty}\int_{0}^{+\infty}t^{N-1}G\Big(\frac{1}{t^{N-2}}\frac{A}{\left(1+r^{2}\right)^{\frac{N-2}{2}}}\Big)r^{N-2}drdt.
\end{array}
\end{eqnarray*}
Then (\ref{eq29}) and (\ref{eq30}) are a consequence of (\ref{eq17}) and (\ref{eq17''}).
\end{proof}
{\bf{When $H(x_{0})>0$.}}\\
\begin{lemma}~\\
Assume that $f(x,\,u)$ satisfies (\ref{eq1})-(\ref{eq3}) and (\ref{eq4}) or (\ref{eq1})-(\ref{eq2}) and (\ref{eqI1})-(\ref{eq4}). Suppose
that there exists some continuous function $g$ such that
\begin{equation}
g(x,u)\geq g(u)\,\,\textrm{ for a.e. $x\in \Omega$ and
for all $u\in \R$} \label{eq16'}
\end{equation}
and the primitive $G(u)=\int_{0}^{u}g(t)dt$ satisfies, for $N\geq
4$
\begin{equation}
\lim_{\varepsilon\rightarrow 0}\varepsilon^{\frac{N-2}{2}}\int_{\varepsilon^{\frac{1}{2}}}^{+\infty}t^{N-1}
\int_{0}^{+\infty}G\left(\frac{t^{-(N-2)}}{(1+r^{2})^{\frac{(N-2)}{2}}}
\right)r^{N-2}drdt=0. \label{eq17'}
\end{equation}
and for $N=3$
\begin{equation}
\displaystyle\lim_{\varepsilon\rightarrow
0}\frac{\varepsilon^{\frac{1}{2}}}{|\ln(\varepsilon)|}\int_{\varepsilon^{\frac{1}{2}}}^{+\infty}t^{2}
\int_{0}^{+\infty}G\left(\frac{t^{-1}}{(1+r^{2})^{\frac{1}{2}}}
\right)rdrdt=0.
\label{eq17''}
\end{equation}
Then condition (\ref{important}) holds.
\label{lemma2}
\end{lemma}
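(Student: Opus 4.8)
The plan is to argue exactly as in the proof of Lemma~\ref{lemma1}, starting once more from the estimate (\ref{eq23}) for $M_{\varepsilon}$; the only structural change is the sign of the mean--curvature contribution. Because we are now in the regime $H(x_{0})>0$ and, by (\ref{eqS1}), the constants $A'_{2}$ and $A_{2}-\frac{2}{2_{*}}\frac{A_{1}B_{2}}{B_{1}}$ are strictly positive, the curvature term entering $M_{\varepsilon}$ through (\ref{eq23}) is this time strictly negative and of order $\varepsilon$ (of order $\varepsilon|\log\varepsilon|$ when $N=3$). Thus (\ref{eq23}) already exhibits a genuine gap below the threshold $\frac{1}{2(N-1)}\frac{p(x_{0})}{(Q(x_{0}))^{N-2}}S_{1}^{N-1}$, and the entire task reduces to checking that the nonlinear term $\int_{\Omega}G(x,t_{\varepsilon}W_{\varepsilon,x_{0}})dx$ cannot consume this gain.

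First I would reduce the perturbation exactly as in (\ref{eq27}): using the lower bound (\ref{eq16'}), the explicit shape (\ref{eqepsilon}) of $W_{\varepsilon,x_{0}}$, the monotonicity of $G$, and the fact that $t_{\varepsilon}$ stays bounded away from $0$ and $+\infty$ (as already exploited around (\ref{eq22}) in the proof of Lemma~\ref{lemma1}), one obtains
\begin{equation*}
\int_{\Omega}G(x,t_{\varepsilon}W_{\varepsilon,x_{0}})dx\geq\int_{\Omega}G\Big(\frac{A\varepsilon^{\frac{N-2}{2}}}{\left[\left(\varepsilon+x_{N}\right)^{2}+|x'-x'_{0}|^{2}\right]^{\frac{N-2}{2}}}\Big)dx
\end{equation*}
for some $A>0$. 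Next I would run the same change of variables $t=\varepsilon^{\frac{1}{2}}(1+y_{N})$ as at the end of the proof of Lemma~\ref{lemma1}, rewriting the right-hand side as $\varepsilon^{\frac{N}{2}}\omega\int_{\varepsilon^{1/2}}^{+\infty}\int_{0}^{+\infty}t^{N-1}G\big(\frac{A}{t^{N-2}(1+r^{2})^{(N-2)/2}}\big)r^{N-2}dr\,dt$ (and the analogous expression for $N=3$). By hypothesis (\ref{eq17'}) (resp. (\ref{eq17''}) for $N=3$) this quantity, after division by $\varepsilon$ (resp. $\varepsilon|\log\varepsilon|$), tends to $0$; hence it is $o(\varepsilon)$ (resp. $o(\varepsilon|\log\varepsilon|)$), so that $-\int_{\Omega}G(x,t_{\varepsilon}W_{\varepsilon,x_{0}})dx\leq o(\varepsilon)$ (resp. $\leq o(\varepsilon|\log\varepsilon|)$).

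Combining this upper bound with (\ref{eq23}) yields, for $N\geq4$,
\begin{equation*}
M_{\varepsilon}\leq\frac{1}{2(N-1)}\frac{p(x_{0})}{(Q(x_{0}))^{N-2}}S_{1}^{N-1}-H(x_{0})\frac{p(x_{0})}{2}\Big(A_{2}-\frac{2}{2_{*}}\frac{A_{1}B_{2}}{B_{1}}\Big)\varepsilon+o(\varepsilon),
\end{equation*}
and since $H(x_{0})>0$ and the bracketed constant is positive, the negative $\varepsilon$-term dominates the $o(\varepsilon)$ remainder once $\varepsilon$ is small; the case $N=3$ is identical with $\varepsilon|\log\varepsilon|$ in place of $\varepsilon$. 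Therefore $M_{\varepsilon}<\frac{1}{2(N-1)}\frac{p(x_{0})}{(Q(x_{0}))^{N-2}}S_{1}^{N-1}$ for $\varepsilon$ small enough, which is precisely condition (\ref{important}) with $v_{0}=W_{\varepsilon,x_{0}}$.

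I expect the only delicate point to be the order bookkeeping in the last display: one must guarantee that the perturbation $\int_{\Omega}G(x,t_{\varepsilon}W_{\varepsilon,x_{0}})dx$ is controlled from below by a quantity that is $o(\varepsilon)$, and hence cannot compensate the favorable curvature term $-H(x_{0})(\cdots)\varepsilon$. This is exactly the role of the vanishing-limit hypotheses (\ref{eq17'})--(\ref{eq17''}), which here replace the divergence hypothesis (\ref{eq17}) used when $H(x_{0})\leq0$: in that case the curvature was unhelpful (of the wrong sign) and $G$ had to supply the whole gain, whereas now the curvature supplies the gain and $G$ need only be negligible.
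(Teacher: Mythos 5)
Your proof is correct and takes essentially the same approach as the paper: the paper's own proof of this lemma is a one-line deferral to the proof of Lemma~\ref{lm2}, and your argument is exactly that adaptation, reusing (\ref{eq23}), the lower bound (\ref{eq27}), and the same change of variables, with the sign of $H(x_{0})$ now making the curvature term supply the strict gap of order $\varepsilon$ (resp.\ $\varepsilon|\log\varepsilon|$ for $N=3$) while the vanishing-limit hypotheses (\ref{eq17'})--(\ref{eq17''}) guarantee the perturbation term is $o(\varepsilon)$ (resp.\ $o(\varepsilon|\log\varepsilon|)$) and so cannot cancel it.
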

\begin{proof}~\\
The proof of this Lemma is similar to proof of Lemma~{\ref{lm2}}.
\end{proof}
Now let us give some examples for the nonlinear perturbation.
\\
{\bf{Examples of $f$:}}~ \\
If $ H(x_{0})>0$ then the two functions $g$ below  satisfy the hypothesis of  {Lemma~{\ref{lemma1}}.

\begin{enumerate}
\item[1)] $g(x,u)=g(u)=\mu\, |u|^{r-2}u$ with $\mu>0$ and $2_{*}<r<2^{*}$ .
\item[2)] \begin{equation*}
g(x,\,u)=g(u)=\left\{\begin{array}{ll}
(3+\gamma) u^{2+\gamma} \ln(u)+ |u|^{2+\gamma} \quad \textrm{if $u>1$}\\[\medskipamount]
(3+\gamma) |u|^{2+\gamma} |\ln(u)|+ |u|^{2+\gamma} \quad\textrm{if $u<1$}
\end{array}
\right.
\end{equation*}
with $0<\gamma<\frac{2}{N-2}$.
\end{enumerate}

 If $H(x_{0})<0$ then the two functions $g$ below  satisfy {Lemma~{\ref{lemma2}}.
\begin{enumerate}
\item[1)] $g(x,\,u)=g(u)=\mu\,|u|^{r-2}u$ with $\mu\in \R$ and $2<r<2_{*}$.
\item[2)] $g(x,\,u)=g(u)=\displaystyle\pm\frac{5}{2}\frac{|u|^{\frac{3}{2}}+|u|^{\frac{7}{2}}}{(1+5 |u|^{2})^{2}}$.
\end{enumerate}
%%%%%%%%%%%%%%%%%%%%%%%
%%%%%%%%%%%%%%
%%%%%%%%%%
\section{Existence results in presence of two critical exponents.}
We assume that $\beta=1$ and, as in the previous section, the nonlinearity $f(x,u)$ satisfies the following basic assumptions.
\begin{equation}
f(x,u)=a(x)u+g(x,u), \label{q1}
\end{equation}
with
\begin{equation}
a(x)\in L^{\infty}(\Omega),
\end{equation}
\begin{equation}
|g(x,u)|=o(|u|)\quad\textrm{as\quad $u\rightarrow 0$,\quad
uniformly in $x$,} \label{q2}
\end{equation}
\begin{equation}
|g(x,u)|=o(|u|^{2^{*}-1})\quad\textrm{as\,\, $|u|\rightarrow +\infty$,\quad
uniformly in $x$.} \label{q3}
\end{equation}
Moreover we assume that
\begin{equation}
\lambda_{1}(a)=\displaystyle\inf \left\{\int_{\Omega}|\nabla u|^{2}-a(x)u^{2}dx,\,\,\,\int_{\Omega}u^{2}dx=1 \right\}> 0 .
\label{q4}
\end{equation}
Set
$F(x,u)=\displaystyle \int_{0}^{u}f(x,t) dt$ for $x\in
\bar{\Omega},$ $u\in\R$. Let define, for $u\in H^{1}(\Omega)$,
\begin{equation}
\Phi(u)=\frac{1}{2}\int_{\Omega}\hspace{-2mm}p(x)|\nabla
u|^{2}dx-\frac{1}{2^{*}}\int_{\Omega}|u|^{2^{*}}dx-\frac{1}{2_{*}}\int_{\partial\Omega}\hspace{-2mm}p(x)Q(x)|u|^{2_{*}}ds_{x}-\int_{\Omega}\hspace{-2mm}F(x,u)
dx.
\label{eqEnergie}
\end{equation}
Our main result in this section is
\begin{theorem}~\\
Assume (\ref{q1})-(\ref{q4}) and suppose, moreover, that
\begin{equation}
\begin{array}{ll}
\textrm{there exists some $v_{0}\in H^{1}$,\,$v_{0}\geq 0$ on
$\Omega$,\,$v_{0}\neq 0$ on $\partial\Omega$, such that}\\
\displaystyle \sup_{t\geq 0}\Phi(t
v_{0})< M(S,S_{1}),\quad \textrm{where $M(S,S_{1})$ is defined in (\ref{eqnouveau valeur1})}.
\end{array}
\label{mportant}
\end{equation}
Then, problem (\ref{nouveau}) possesses a solution. \label{Th1}
\end{theorem}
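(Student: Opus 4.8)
The plan is to run, for the functional (\ref{eqEnergie}), the same mountain--pass scheme that proved Theorem~\ref{th1}, but with the threshold $\frac{1}{2(N-1)}\frac{p(x_{0})}{(Q(x_{0}))^{N-2}}S_{1}^{N-1}$ replaced by $M(S,S_{1})$ and with a compactness lemma adapted to the two critical terms $\int_{\Omega}|u|^{2^{*}}dx$ and $\int_{\partial\Omega}p(x)Q(x)|u|^{2_{*}}ds_{x}$. First I would establish the geometry of $\Phi$. Near the origin, using $\int_{\Omega}|u|^{2^{*}}\le C\|u\|^{2^{*}}$, the trace bound $\int_{\partial\Omega}p(x)Q(x)|u|^{2_{*}}\le C\|u\|^{2_{*}}$, the estimate $\int_{\Omega}G(x,u)\le\varepsilon\|u\|^{2}+C\|u\|^{q}$ for some $2<q<2^{*}$ coming from (\ref{q2})--(\ref{q3}), and the equivalent norm provided by (\ref{q4}), one gets $\Phi(u)\ge\rho>0$ on a small sphere $\|u\|=\alpha$, exactly as in (\ref{AR1}). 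Since $2^{*}$ is the highest homogeneity, $\Phi(tv_{0})\to-\infty$, so there is $v=t_{0}v_{0}$ with $\|v\|>\alpha$ and $\Phi(v)\le 0$. Setting $c=\inf_{\mathcal{P}\in\mathcal{A}}\max_{w\in\mathcal{P}}\Phi(w)$ and invoking \cite{AR} yields a sequence with $\Phi(u_{j})\to c$, $\Phi'(u_{j})\to 0$; evaluating (\ref{mportant}) along $t\mapsto tv$ gives $c<M(S,S_{1})$.

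The heart of the proof, and the main obstacle, is the compactness statement: a $(PS)_{c}$ sequence with $c<M(S,S_{1})$ is relatively compact. Boundedness comes from $\Phi(u_{j})-\frac{1}{2}\langle\Phi'(u_{j}),u_{j}\rangle=\frac{1}{N}\int_{\Omega}|u_{j}|^{2^{*}}+\frac{1}{2(N-1)}\int_{\partial\Omega}p(x)Q(x)|u_{j}|^{2_{*}}+(\frac12\int_{\Omega}g(x,u_{j})u_{j}-\int_{\Omega}G(x,u_{j}))$, where both leading coefficients $\frac1N=\frac12-\frac1{2^{*}}$ and $\frac1{2(N-1)}=\frac12-\frac1{2_{*}}$ are positive and the $g$--terms are lower order by (\ref{q2})--(\ref{q3}) and interpolation. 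Passing to a weak limit $u_{j}\rightharpoonup u$, with strong convergence in $L^{q}(\Omega)$ and $L^{t}(\partial\Omega)$ for subcritical $q,t$ and a.e.\ convergence, $u$ solves (\ref{nouveau}). Writing $v_{j}=u_{j}-u$ and applying the Brezis--Lieb lemma \cite{BL} to $\int_{\Omega}|u_{j}|^{2^{*}}$ and $\int_{\partial\Omega}p(x)Q(x)|u_{j}|^{2_{*}}$, together with $\int_{\Omega}p|\nabla u_{j}|^{2}=\int_{\Omega}p|\nabla u|^{2}+\int_{\Omega}p|\nabla v_{j}|^{2}+o(1)$, I introduce $P=\lim\int_{\Omega}p|\nabla v_{j}|^{2}$, $a_{\infty}=\lim\int_{\Omega}|v_{j}|^{2^{*}}$, $b_{\infty}=\lim\int_{\partial\Omega}p(x)Q(x)|v_{j}|^{2_{*}}$. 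Subtracting $\langle\Phi'(u),u\rangle=0$ from $\langle\Phi'(u_{j}),u_{j}\rangle\to0$ gives $P=a_{\infty}+b_{\infty}$, and subtracting $\Phi(u)$ from $\Phi(u_{j})\to c$ gives $\frac1N a_{\infty}+\frac1{2(N-1)}b_{\infty}=c-\Phi(u)$.

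It remains to force $(a_{\infty},b_{\infty})=(0,0)$, and this is exactly where the two exponents interact to produce $M(S,S_{1})$. The global Sobolev inequality (with $p\ge p_{0}$) gives $P\ge p_{0}S\,a_{\infty}^{2/2^{*}}$, while the trace argument of (\ref{equtile}), applied to $v_{j}$ and using $\frac{|Q(x)|^{N-2}}{p(x)}\le\frac{(Q(x_{0}))^{N-2}}{p(x_{0})}$, gives $P\ge C_{1}\,b_{\infty}^{2/2_{*}}$ with $C_{1}=\frac{(p(x_{0}))^{1/(N-1)}}{(Q(x_{0}))^{(N-2)/(N-1)}}S_{1}$. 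If $(a_{\infty},b_{\infty})\neq(0,0)$, these two inequalities together with $P=a_{\infty}+b_{\infty}$ confine $(a_{\infty},b_{\infty})$ to a region bounded away from the origin, on which the energy $\frac1N a_{\infty}+\frac1{2(N-1)}b_{\infty}$ is minimized at the point where both inequalities are equalities. There, setting $y=P^{1/(N-2)}$ reduces the system to the quadratic $Ey^{2}+yT_{b}^{1/(N-2)}-T_{b}^{2/(N-2)}=0$ (with $T_{b}=\frac{p(x_{0})}{(Q(x_{0}))^{N-2}}S_{1}^{N-1}$ and $E$ as in (\ref{eqnouveau valeur1})); its positive root carries the factor $1+\sqrt{1+4E}$, and a direct substitution shows the minimal energy equals exactly $M(S,S_{1})$. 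Hence $(a_{\infty},b_{\infty})\neq(0,0)$ would force $c-\Phi(u)\ge M(S,S_{1})$.

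To close the contradiction I would verify $\Phi(u)\ge0$, as in the proof of Lemma~\ref{lem1}: since $u$ solves (\ref{nouveau}), testing with $u$ yields $\Phi(u)=\frac1N\int_{\Omega}|u|^{2^{*}}+\frac1{2(N-1)}\int_{\partial\Omega}p(x)Q(x)|u|^{2_{*}}+(\frac12\int_{\Omega}g(x,u)u-\int_{\Omega}G(x,u))\ge0$. Then $c-\Phi(u)\le c<M(S,S_{1})$ contradicts $c-\Phi(u)\ge M(S,S_{1})$, so $a_{\infty}=b_{\infty}=P=0$ and $u_{j}\to u$ strongly in $H^{1}(\Omega)$. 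Strong convergence gives $\Phi(u)=c\ge\rho>0$, so $u\not\equiv0$ is a solution of (\ref{nouveau}). The genuinely delicate points are the simultaneous use of the two critical inequalities and the explicit minimization identifying the new level $M(S,S_{1})$; the sign estimate $\Phi(u)\ge0$ under (\ref{q1})--(\ref{q4}) alone is the other place where care is needed.
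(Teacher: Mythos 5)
Your overall strategy (mountain pass below a threshold, Brezis--Lieb splitting, two sharp inequalities forcing the level $M(S,S_{1})$) is the right one, but two steps fail as written, and the first is precisely where your route diverges from the paper's. You prove existence \emph{through} compactness: you need $\Phi(u)\geq 0$ for the weak limit $u$ in order to turn $c-\Phi(u)\geq M(S,S_{1})$ into a contradiction, and you then recover $u\not\equiv 0$ from $\Phi(u)=c\geq\rho>0$. But under (\ref{q1})--(\ref{q4}) alone the identity
$\Phi(u)=\frac{1}{N}\int_{\Omega}|u|^{2^{*}}dx+\frac{1}{2(N-1)}\int_{\partial\Omega}pQ|u|^{2_{*}}ds_{x}+\int_{\Omega}\bigl[\frac{1}{2}g(x,u)u-G(x,u)\bigr]dx$
has no sign: $Q$ may be negative on part of $\partial\Omega$, and $g(u)=-\mu|u|^{r-2}u$ with $2<r<2^{*}$ is admissible under (\ref{q2})--(\ref{q3}) and makes $\frac{1}{2}gu-G=-\mu(\frac{1}{2}-\frac{1}{r})|u|^{r}<0$. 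This is exactly why the paper establishes $\Phi(u)\geq 0$, and hence compactness of the sequence, only in the remark following the proof, under the \emph{additional} hypothesis (\ref{eqmardi2}), which is not among the assumptions of Theorem \ref{Th1}. The paper's proof of the theorem itself has a different logical structure that avoids the issue entirely: it supposes $u\equiv 0$, in which case $\Phi(u)=0$ automatically and all the $f$-terms vanish along the sequence (strong subcritical convergence), derives the system (\ref{eqmercredi2}) for the limits $l,m_{1},m_{2}$, and concludes $c\geq M(S,S_{1})$, contradicting (\ref{q7}); the weak limit is then a nontrivial solution, and no strong convergence is needed. You should either restructure your argument that way or add (\ref{eqmardi2}) as a hypothesis.

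Two further steps need repair. First, boundedness: your multiplier $\frac{1}{2}$ cancels the $\|u_{j}\|^{2}$ term, so what remains, $\frac{1}{N}\int_{\Omega}|u_{j}|^{2^{*}}dx+\frac{1}{2(N-1)}\int_{\partial\Omega}pQ|u_{j}|^{2_{*}}ds_{x}+\ldots=c+o(1)+o(\|u_{j}\|)$, contains no norm at all, and the boundary integral is not even nonnegative; this cannot bound $\|u_{j}\|_{H^{1}}$. The paper instead computes $\Phi(u_{j})-\frac{1}{2_{*}}\langle\Phi'(u_{j}),u_{j}\rangle$, which kills the boundary term and keeps positive coefficients on both $\|u_{j}\|^{2}$ and $\int_{\Omega}|u_{j}|^{2^{*}}dx$ (this is (\ref{eqj1})), after which (\ref{q3}) lets one absorb the $g$-terms. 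Second, your interior inequality $P\geq p_{0}S\,a_{\infty}^{2/2^{*}}$ is false on a bounded domain for $H^{1}(\Omega)$ functions: half-bubbles centered on $\partial\Omega$ show that the best constant (modulo a subcritical remainder) is $S/2^{2/N}$, which is what Zhu's theorem, invoked by the paper, provides; see (\ref{eqmercredi3z}). Since boundary concentration is exactly the regime being excluded here, the factor $2^{-2/N}$ cannot be dropped, and it changes the constant $E$ and hence the threshold. (The paper itself wobbles on this point, using $E'$ with the factor in the proof while (\ref{eqnouveau valeur1}) defines $E$ without it, but your version silently relies on the unavailable inequality.)
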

{\bf{Proof of Theorem~{\ref{Th1}}.}}\\
From (\ref{q2}) we have, for any $\varepsilon>0$, there is a
$\delta>0$ such that
\begin{equation*}
|g(x,u)|\leq \varepsilon |u|\quad\textrm{for a.e $x\in\Omega$,
and for all $ |u|\leq \delta$,}
\end{equation*}
thus, by (\ref{q3}), we obtain
\begin{equation*}
|g(x,u)|\leq \varepsilon |u|+C |u|^{2^{*}-1}\quad\textrm{for a.e
$x\in\Omega$, and for all $ u\in \R$,}
\end{equation*}
and for some constant $C$ (depending on $\varepsilon$). Therefore
we have
\begin{equation}
G(x,u)\leq \frac{1}{2} a(x)u^{2}+\frac{\varepsilon}{2}
u^{2}+\frac{C}{2^{*}}|u|^{2^{*}}\quad \textrm{for a.e $x\in\Omega$,
and for all $u\in\R$.} \label{q6}
\end{equation}
Therefore, for all $u\in H^{1}(\Omega)$,
$$\Phi(u)\geq \frac{1}{2}\int_{\Omega}\hspace{-2mm}p(x)|\nabla
u|^{2}dx-\frac{1}{2}
\int_{\Omega}\hspace{-2mm}a(x)|u|^{2}dx-\frac{\varepsilon}{2}
\int_{\Omega}\hspace{-2mm}|u|^{2}dx-\frac{C}{2^{*}}\int_{\Omega}\hspace{-2mm}|u|^{2^{*}}
dx-\frac{1}{2_{*}}\int_{\partial\Omega}\hspace{-2mm}p(x)Q(x)|u|^{2_{*}}ds_{x}$$
Using (\ref{q4}) we easily see that, for
$\varepsilon>0$ small enough , there exist constants $k>0$,
$C_{1}>0$ and $C_{2}>0$ such that
\begin{eqnarray*}
\Phi(u)&\geq & k
\|u\|_{H^{1}}^{2}-C_{1}\|u\|_{H^{1}}^{2^{*}}-C_{2}\|u\|_{H^{1}}^{2_{*}}\\
&\geq &\|u\|_{H^{1}}^{2}\left(
k-C_{1}\|u\|_{H^{1}}^{2^{*}-2}-C_{2}\|u\|_{H^{1}}^{2_{*}-2}\right)\quad
\textrm{for all $u\in H^{1}$.}
\end{eqnarray*}
Which implies, since $2_{*}>2$ and $2^{*}>2$, that for some small $\alpha>0$ there exists $\rho>0$ such that
$$\Phi(u)\geq \rho,\quad \textrm{provided $\|u\|=\alpha$.}$$
On the other hand, for any $u\in H^{1}(\Omega)$, $u\not\equiv 0$ in $\bar{\Omega}$, we have by (\ref{q3}) $\lim_{t\rightarrow +\infty}\Phi(tu)=-\infty$. Thus for later purpose we take $v=t_{0} U_{\varepsilon,x_{0}}$, where $t_{0}>0$ is chosen large enough so that $v\not\in U$ and $\Phi(v)\leq 0$.\\
Set
 \begin{equation}
c=\inf_{\cal{P}\in \cal{A}}\max_{w\in\cal{P}}\Phi(w),\label{AR3}
 \end{equation}
where $\cal{A}$ denotes the class of continuous paths joining
 $0$ to $v$.\\
Looking at (\ref{mportant}) we see that $c < M(S, S_{1})$.\\
By a result of Ambrosetti and Rabinowtz \cite{AR}, see also \cite{BN}, there exists a sequence $\{u_{j}\}$ in $H^{1}(\Omega)$ satisfying
\begin{equation}
\Phi(u_{j})\rightarrow c< M(S,S_{1})
\label{q7}
\end{equation}
and
\begin{equation}
\hspace{-46mm}\Phi'(u_{j})\rightarrow 0\quad\textrm{in}\,\,H^{-1}(\Omega) \label{q8}
\end{equation}
Using (\ref{q1}) and (\ref{q4}), from (\ref{q7}) and (\ref{q8}) we write
\begin{equation}
\frac{1}{2} \|u_{j}\|^{2}-\frac{1}{2^{*}}\int_{\Omega}|u_{j}|^{2^{*}}dx-\frac{1}{2_{*}}\int_{\partial\Omega}p(x)Q(x)|u_{j}|^{2_{*}}ds_{x}-\int_{\Omega}G(x,u_{j})=c+o(1),
\label{qLm1}
\end{equation}
and
\begin{equation}
\|u_{j}\|^{2}-\int_{\Omega}|u_{j}|^{2^{*}}dx-\int_{\partial\Omega}p(x)Q(x)|u_{j}|^{2_{*}}ds_{x}-\int_{\Omega}g(x,u_{j})u_{j}dx=<\xi_{j},u_{j}>
\label{qLm3}
\end{equation}
with $\xi_{j}\rightarrow 0$ in $H^{-1}$.\\
We start by showing that $\{u_{j}\}$ is bounded in $H^{1}(\Omega)$.\\
Computing (\ref{qLm1})$-\frac{1}{2_{*}}$ (\ref{qLm3}), we obtain
\begin{equation}
\frac{1}{2(N-1)}\|u_{j}\|^{2}+\frac{N-2}{2N(N-1)}\int_{\Omega}|u_{j}|^{2^{*}}dx
-\int_{\Omega}\left[G(x,u_{j})-\frac{1}{2_{*}}g(x,u_{j})u_{j}\right]dx=c+o(1)+<\xi_{j},u_{j}>.
\label{eqj1}
\end{equation}
On the other hand, from (\ref{q3}) we have for all $\varepsilon>0$ there exists $C>0$ such that
\begin{equation}\begin{array}{lll}
|g(x,u)|\leq \varepsilon |u|^{2^{*}-1}+C\quad \textrm{for a.e $x\in \Omega$ and for all $u\in \R$,}
\label{eqj2}
\end{array}
\end{equation}
and therefore
\begin{equation}
|G(x,u)|\leq \frac{\varepsilon}{2^{*}}|u|^{2^{*}}+C u\quad \textrm{for a.e $x\in \Omega$ and for all $u\in \R$}.
\label{eqj3}
\end{equation}
We deduce from (\ref{eqj1})-(\ref{eqj3}), after using the embedding $L^{2}(\Omega)\hookrightarrow L^{1}(\Omega)$ and $H^{1}(\Omega)\hookrightarrow L^{2}(\Omega)$ that, for $\varepsilon>0$ small enough,
$$\frac{1}{2(N-1)}\|u_{j}\|^{2}+\frac{N-2}{2N(N-1)}(1+\varepsilon)
\int_{\Omega}|u_{j}|^{2^{*}}dx-C'\|u_{j}\|\leq c+o(1)$$
for some constant $C'>0$. This gives that $\{u_{j}\}$ is bounded in $H^{1}(\Omega)$, otherwise we obtain a contradiction.\\
Extract a subsequence, still denoted by $u_{j}$, such that \basn
u_{j}&\rightharpoonup& u \quad\textrm{weakly in
$H^{1}(\Omega)$},\\[\medskipamount]
u_{j}&\rightarrow& u \quad\textrm{strongly in $L^{t}(\Omega)$ for all $t<2^{*}=\frac{2N}{N-2}$},\\[\medskipamount]
u_{j}&\rightarrow& u \quad\textrm{a.e. on $\Omega$},\\[\medskipamount]
%(u_{j}^{+})^{2_{*}-1}&\rightharpoonup& (u^{+})^{2_{*}-1}
%\quad\textrm{weakly in $(L^{2_{*}}(\Omega))^{*}$},\\[\medskipamount]
f(x,u_{j})&\rightharpoonup& f(x,u) \quad\textrm{weakly in
$L^{\frac{2^{*}}{2^{*}-1}}(\Omega)$},\\[\medskipamount]
u_{j}&\rightharpoonup& u \quad\textrm{weakly in
$L^{2_{*}}(\partial\Omega)$},\\[\medskipamount]
u_{j}&\rightharpoonup& u \quad\textrm{weakly in $L^{2^{*}}(\Omega)$}.
\easn
We shall now verify that $u\not\equiv 0$ on $\Omega$.\\
Indeed , suppose that $u\equiv 0$. We claim that
\begin{equation}
\displaystyle \int_{\Omega}f(x,u_{j})u_{j}dx\rightarrow
0\quad\textrm{and\quad} \int_{\Omega}F(x,u_{j})dx\rightarrow
0.
\label{eqj4}
\end{equation}
From (\ref{eqj2}) and (\ref{eqj3}), we have, for all $\varepsilon>0$ there exists $C>0$ such that
\begin{equation*}
\left|\int_{\Omega}f(x,u_{j})u_{j}dx\right|\leq
\varepsilon \int_{\Omega}|u_{j}|^{2^{*}}dx+ C
\int_{\Omega}|u_{j}|dx
\end{equation*}
and
\begin{equation*}
\left|\int_{\Omega}F(x,u_{j}^{+})dx\right|\leq
\frac{\varepsilon}{2^{*}} \int_{\Omega}|u_{j}|^{2^{*}}dx+ \frac{C}{2}
\int_{\Omega}|u_{j}|^{2}dx.
\end{equation*}
Since $\{u_{j}\}$ remains bounded in $L^{2^{*}}(\Omega)$ and $u_{j}\rightarrow 0$ in $L^{2}(\Omega)$ we obtain (\ref{eqj4}).\\
Now, extruding a subsequence, still denoted by $u_{j}$, we may assume that there exist some constants $l\geq 0$, $m_{1}\geq 0$ and $m_{2}\geq 0$ such that
\begin{equation}
\int_{\Omega}p(x)|\nabla u_{j}|^{2}dx\rightarrow l,\quad
\int_{\Omega}|u_{j}|^{2^{*}}dx\rightarrow m_{1},\quad\textrm{and}\quad
\int_{\partial\Omega}p(x)Q(x)|u_{j}|^{2_{*}}ds_{x}\rightarrow m_{2}.
\label{mercredi1}
\end{equation}
Passing to the limit in (\ref{qLm1}) and (\ref{qLm3}), we get
\begin{equation}
\frac{1}{2}l-\frac{1}{2^{*}} m_{1}-\frac{1}{2_{*}}m_{2}=c\quad\textrm{and}\quad
l-m_{1}-m_{2}=0.
\label{eqmercredi2}
\end{equation}
From the result of \cite[Theorem 01]{Z}, we know that there exists a constant $C(\Omega)>0$ such that for
every $w\in H^{1}(\Omega)$
\begin{equation*}
\int_{\Omega}|\nabla w|^{2}dx+C(\Omega)\int_{\Omega}|w|^{k}dx\geq
\frac{S}{2^{\frac{2}{N}}}\left(\int_{\Omega}|w|^{2_{*}}dx\right)^{\frac{2}{2^{*}}},
\end{equation*}
with $k=\frac{2N}{N-1}$ if $N\geq 4$ and $k>3=\frac{2N}{N-1}$ if
$N=3$. \\
We apply this result for $w_{j}=(p(x))^{\frac{1}{2}}u_{j}$ and in
particular for $N=3$ we take $k$ such that $6=\frac{2N}{N-2}>k>3$,
we obtain for $j$ large enough
\begin{equation*}
\int_{\Omega}|\nabla
(p(x))^{\frac{1}{2}}u_{j}|^{2}dx+C(\Omega)\int_{\Omega}|(p(x))^{\frac{1}{2}}u_{j}|^{k}dx\geq
\frac{S}{2^{\frac{2}{N}}}\left(\int_{\Omega}|(p(x))^{\frac{1}{2}}u_{j}|^{2^{*}}dx\right)^{\frac{2}{2^{*}}}
\end{equation*}
Since $k<\frac{2N}{N-2}$ for every $N\geq 3$, thanks to the
compact embedding $H^{1}(\Omega)\hookrightarrow L^{k}(\Omega)$, we
have $u_{j}\rightarrow 0$ strongly in $L^{k}(\Omega)$ and we
deduce
\begin{equation*}
\int_{\Omega}p(x)|\nabla u_{j}|^{2}dx+o(1)\geq
\frac{S}{2^{\frac{2}{N}}}\left(\int_{\Omega}|(p(x))^{\frac{1}{2}}u_{j}|^{2_{*}}dx\right)^{\frac{2}{2^{*}}}+o(1).
\label{eq15z}
\end{equation*}
Using the fact that $p(x)\geq p_{0}$ for all $x\,\in \bar{\Omega}$, we see that
\begin{equation*}
\int_{\Omega}p(x)|\nabla u_{j}|^{2}dx+o(1)\geq
\frac{p_{0}\,S}{2^{\frac{2}{N}}}\left(\int_{\Omega}|u_{j}|^{2_{*}}dx\right)^{\frac{2}{2^{*}}}+o(1).
\end{equation*}
At the limit we obtain
\begin{equation}
(m_{1})^{\frac{2}{2^{*}}}\frac{p_{0}\,S}{2^{\frac{2}{N}}}\leq l .
\label{eqmercredi3z}
\end{equation}
On the other hand, by the same way, from \cite[Theorem 02]{Z} we have (see (\ref{equtile}) for more details)
\begin{equation}
(m_{2})^{\frac{2}{2_{*}}}\left[\frac{p(x_{0})}{(Q(x_{0}))^{N-2}}\right]^{\frac{1}{N-1}}S_{1}\leq l.
\label{eqmercredi3}
\end{equation}
Combining (\ref{eqmercredi2}), (\ref{eqmercredi3z})  and (\ref{eqmercredi3}) we obtain the following
\begin{equation}
\left\{
\begin{array}{llll}
\displaystyle\frac{1}{2(N-1)}l+\frac{N-2}{2N(N-1)}m_{1}=c\\[\medskipamount]
\displaystyle\frac{1}{N}l-\frac{N-2}{2N(N-1)}m_{2}=c\\[\medskipamount]
\displaystyle m_{1}\leq \left(\frac{2^{\frac{2}{N}}\,l}{p(a) S}\right)^{\frac{2^{*}}{2}}\\[\medskipamount]
\displaystyle m_{2}\leq \left(\frac{l}{[\frac{p(x_{0})}{(Q(x_{0}))^{N-2}}]^{\frac{1}{N-1}}S_{1}}\right)^{\frac{2_{*}}{2}}.
\end{array}
\right.
\label{eqmercredi4}
\end{equation}
An easy computation yields
\begin{equation}
\frac{1}{N}l-\frac{N-2}{2(N-1)N}(\frac{l}{\frac{p(x_{0})}{(Q(x_{0}))^{N-2}}S_{1}})^{\frac{2_{*}}{2}}\leq c \leq \frac{1}{2(N-1)}l+\frac{N-2}{2(N-1)N}(\frac{2^{\frac{2}{N}}\,l}{p_{0}\,S})^{\frac{2^{*}}{2}}.
\label{independance}
\end{equation}
We can write
$$l\leq (\frac{l}{\frac{p(x_{0})}{(Q(x_{0}))^{N-2}}S_{1}})^{\frac{2_{*}}{2}}+(\frac{2^{\frac{2}{N}}\,l}{p_{0}\,S})^{\frac{2^{*}}{2}}.$$
If $l=0$ then, since $c>0$, we obtain a contradiction and we get the desired result. Now, if $l\not=0$ we reduce to the study of the following polynomial
$$\frac{1}{(2^{-\frac{2}{N}}\,p_{0}\,S)^{\frac{N}{N-2}}}t^{2}+\frac{1}{(\frac{p(x_{0})}{(Q(x_{0}))^{N-2}}S_{1})^{\frac{N-1}{N-2}}}t-1\geq 0\quad\textrm{where $t=l^{\frac{1}{N-2}}$}.$$
Which is possible if $t\geq \displaystyle\frac{2 (\frac{p(x_{0})}{(Q(x_{0}))^{N-2}}S_{1})^{\frac{N-1}{N-2}}}{1+\sqrt{1+4E'}}$ where $E'= \displaystyle\left(\frac{\frac{p(x_{0})}{(Q(x_{0}))^{N-2}}S_{1}^{N-1}}{(2^{-\frac{2}{N}}\,p_{0}\,S)^{\frac{N}{2}}}\right)^{\frac{2}{N-2}}$.\\
From the left inequality of (\ref{independance}) and the fact that $l=t^{N-2}$, we obtain $c\geq M(S, S_{1})$ which gives a contradiction with (\ref{q7}). Consequently $u\not\equiv 0$ and $u$ is a solution of (\ref{nouveau}).
\begin{remark}~\\
If we assume that
\begin{equation}
F(x,v)\leq\frac{1}{2}f(x,v) v+\frac{1}{N}|v|^{2^{*}},\quad \textrm{for all $v\in \R$ and for for a.e $x\in\Omega$}.
\label{eqmardi2}
\end{equation}
then the previous sequence $\{u_{j}\}$ is relatively compact in $H^{1}(\Omega)$.
\end{remark}
Let $\{u_{j}\}$ be the sequence defined in the proof of Theorem~{\ref{Th1}}, we recall that $u_{j}$ converge weakly to $u$ in $H^{1}(\Omega)$. We will show that $u_{j}$ converges strongly to $u$ in $H^{1}(\Omega)$.\\
Firstly, since $u$ is a solution of (\ref{nouveau}), we have
\begin{equation*}
\int_{\Omega}p(x)|\nabla u|^{2}dx=\int_{\Omega}|u|^{2^{*}}dx+\int_{\Omega}f(x,u)udx+\int_{\partial\Omega}p(x)Q(x)|u|^{2_{*}}ds_{x}.
\end{equation*}
Therefore
\begin{equation*}
\Phi(u)=\int_{\Omega}\left\{\frac{1}{N}|u|^{2^{*}}+\frac{1}{2}f(x,u)u-F(x,u)\right\}dx+\frac{1}{2(N-1)}\int_{\partial\Omega}p(x)Q(x)|u|^{2_{*}}ds_{x}.
\end{equation*}
Using (\ref{eqmardi2}) we have $\Phi(u)\geq 0$.\\
Now, we set $v_{j}=u_{j}-u$.\\
We write
\begin{equation}
\int_{\Omega}p(x)|\nabla u_{j}|^{2}dx=\int_{\Omega}p(x)|\nabla u|^{2}dx+\int_{\Omega}p(x)|\nabla v_{j}|^{2}dx+o(1)
\label{eqjeudi1}
\end{equation}
and from \cite{BL} we deduce that
\begin{equation}
\int_{\Omega}|u_{j}|^{2^{*}}dx=\int_{\Omega}|u|^{2^{*}}dx+\int_{\Omega}|v_{j}|^{2^{*}}dx+o(1),
\label{eqjeudi2}
\end{equation}
and
\begin{equation}
\int_{\partial\Omega}p(x)Q(x)|u_{j}|^{2_{*}}ds_{x}=\int_{\partial\Omega}p(x)Q(x)|u|^{2_{*}}ds_{x}+\int_{\partial\Omega}p(x)Q(x)|v_{j}|^{2_{*}}ds_{x}+o(1).
\label{eqjeudi3}
\end{equation}
Inserting (\ref{eqjeudi1}), (\ref{eqjeudi2}) and (\ref{eqjeudi3}) into (\ref{qLm1}) and (\ref{qLm3}) we get
\begin{equation}
\Phi(u)+\frac{1}{2}\int_{\Omega}p(x)|\nabla v_{j}|^{2}dx-\frac{1}{2^{*}}\int_{\Omega}|v_{j}|^{2^{*}}dx-\frac{1}{2_{*}}\int_{\partial\Omega}p(x)Q(x)|v_{j}|^{2_{*}}ds_{x}=c+o(1)
\label{eqjeudi4}
\end{equation}
and (looking at (\ref{q8}))
\begin{equation}
\int_{\Omega}p(x)|\nabla v_{j}|^{2}dx-\int_{\Omega}|v_{j}|^{2^{*}}dx-\int_{\partial\Omega}p(x)Q(x)|v_{j}|^{2_{*}}ds_{x}=o(1).
\label{eqjeudi5}
\end{equation}
Now, we assume (for a subsequence) that exists some constants $l\geq 0$, $m_{1}\geq 0$ and $m_{2}\geq 0$ such that
$$\int_{\Omega}p(x)|\nabla v_{j}|^{2}dx\rightarrow l,\quad \int_{\Omega}|v_{j}|^{2^{*}}dx\rightarrow m_{1}\quad\textrm{and}\quad \int_{\partial\Omega}p(x)Q(x)|v_{j}|^{2_{*}}ds_{x}\rightarrow m_{2}.$$
Passing to limit in (\ref{eqjeudi4}) and (\ref{eqjeudi5}), using the Sobolev embedding, a easy computation yields
\begin{equation*}
\left\{
\begin{array}{llll}
\frac{1}{2(N-1)}l+\frac{N-2}{2N(N-1)}m_{1}=c-\Phi(u)\\[\medskipamount]
\frac{1}{N}l-\frac{N-2}{2N(N-1)}m_{2}=c-\Phi(u)\\[\medskipamount]
m_{1}\leq \left(\frac{l}{p(a) S}\right)^{\frac{2^{*}}{2}}\\[\medskipamount]
m_{2}\leq \left(\frac{l}{[\frac{p(x_{0})}{(Q(x_{0}))^{N-2}}]^{\frac{1}{N-1}}S_{1}}\right)^{\frac{2_{*}}{2}}.
\end{array}
\right.
\end{equation*}
Therefore, as in end of proof of Theorem~{\ref{Th1}}, if $l\not=0$ then $c-\Phi(u)\geq M(S,S_{1})$ which is a contradiction since $c<M(S,S_{1})$ and $\Phi(u)\geq 0$. Consequently $l=0$ and then $u_{j}\rightarrow u$ strongly in $H^{1}(\Omega)$.
\subsection{Sufficient conditions on $f(x,u)$ which give condition (\ref{mportant}):}
We recall
$$S=\inf\left\{\int_{\R^{N}}|\nabla u|^{2}dx;\,u\in
H^{1}(\R^{N}),\,\int_{\R^{N}}|u|^{2^{*}}dx=1\right\}.$$
We consider, for all $\varepsilon>0$, the following functions
\begin{equation}
U_{\varepsilon,y}(x)=\left(\frac{\varepsilon}{\varepsilon^{2}+|x'-y'|^{2}+|x_{N}-y_{N}+\mu(N-2)^{-1}\varepsilon|^{2}}\right)^{\frac{N-2}{2}},
\label{eqmardi1}
\end{equation}
where $x=(x',x_{N})$, $y=(y',y_{N})$ $\in \R^{N-1}\times ]0,\,+\infty[$, $\mu\in \R$
and $\displaystyle u_{\varepsilon, x_{0}}=\xi(x)U_{\varepsilon,x_{0}}(x),$
where $\xi$ be a radial $C^{\infty}$-function such that, for a fixed positive constant $R$,
\begin{equation*}
\xi(x)=\left\{\begin{array}{ll}
1 \,\,\textrm{if $|x-x_{0}|\leq \frac{R}{4}$}\\
0\,\,\textrm{if $|x-x_{0}|> \frac{R}{2}$}
\end{array}
\right.
\end{equation*}

It is known, see \cite{CFS} and \cite{YZ}, that $U_{\varepsilon,y}$ is a solution of the following problem
\begin{equation}
\left\{\begin{array}{lll}
-\Delta u=N(N-2) u^{\frac{N+2}{N-2}}&\textrm{in $\R^{n}_{+}$}\\[\medskipamount]
u>0 &\textrm{in $\R^{N}_{+}$}\\[\medskipamount]
-\frac{\partial u}{\partial x_{N}}=\mu\,u^{\frac{N}{N-2}} &\textrm{on $\partial\R^{N}_{+}=\R^{N-1}$}.
\end{array}
\right.
\label{equation limite}
\end{equation}
We draw on estimates made in \cite[pages 17-22]{CFT}, we write
\begin{equation}
\int_{\Omega}p(x)|\nabla u_{\varepsilon,x_{0}}|^{2}dx=p(x_{0})A_{\mu}-\mu H(x_{0})p(x_{0})\left\{\begin{array}{lll}
K_{1}\varepsilon+o(\varepsilon)&\textrm{if $N\geq 4$}\\[\medskipamount]
K_{0}\varepsilon|\log{\varepsilon}|+o(\varepsilon|\log{\varepsilon}|)&\textrm{If $N=3$,}
\end{array}
\right.
\label{ah1}
\end{equation}

\begin{equation}
\int_{\Omega}|u_{\varepsilon,x_{0}}|^{2^{*}}dx=B_{\mu}-\mu H(x_{0})K_{2}\varepsilon+o(\varepsilon)\quad \textrm{for all $N\geq 3$},
\label{ah2}
\end{equation}
\begin{equation}
\int_{\partial\Omega}p(x)Q(x)|u_{\varepsilon,x_{0}}|^{2_{*}}ds_{x}=p(x_{0})Q(x_{0})C_{\mu}+\mu H(x_{0})p(x_{0})Q(x_{0}) K_{3}\varepsilon+o(\varepsilon)\quad\textrm{for all $N\geq 3$},
\label{ah3}
\end{equation}
where $A_{\mu}$, $B_{\mu}$, $C_{\mu}$ and $K_{i}>0$ for $i\in \{0,\,1,\,2,\,3\}$ are defined by
\begin{equation}
A_{\mu}=\int_{\R^{N}_{+}}|\nabla U_{\varepsilon,x_{0}}|^{2}dx=\int_{\frac{\mu}{N-2}}^{+\infty}\int_{\R^{N-1}}\frac{|x|^{2}}{(1+|x|^{2})^{N}}dx,
\label{imen1}
\end{equation}
\begin{equation}
B_{\mu}=\int_{\R^{N}_{+}}|U_{\varepsilon,x_{0}}|^{2^{*}}dx=\int_{\frac{\mu}{N-2}}^{+\infty}\int_{\R^{N-1}}\frac{1}{(1+|x|^{2})^{N}}dx,
\label{imen2}
\end{equation}
\begin{equation}
C_{\mu}=\int_{R^{N-1}}|U_{\varepsilon,x_{0}}|^{2_{*}}dx'=
\frac{1}{(1+(\frac{\mu}{N-2})^{2})^{\frac{N-2}{2}}}\int_{\R^{N-1}}\frac{1}{(1+|y|^{2})^{N-1}}dy,
\label{imen3}
\end{equation}
\begin{equation}
K_{1}=(N-2)^{2}\left(\frac{N+1}{N-3}+2\frac{N-1}{N-3}\mu^{2}\right) K_{2},
\label{imen4}
\end{equation}
\begin{equation}
K_{3}=2 (N-1)\mu K_{2}
\label{imen5}
\end{equation}
with $K_{2}>0$ and $K_{0}>0$ are some constants.
%%%%%%%%%%%%%%%%%%%%%%
%%%%%%%%%%%%%%%%%%%%%%%%%%%%%%%%%%%%%%ù
Let
$$J(u)=\frac{1}{2}p(x_{0})\int_{\R^{N}_{+}}|\nabla u|^{2}dx-\frac{1}{2^{*}}\int_{\R^{N}_{+}} |u|^{2^{*}}dx-\frac{1}{2_{*}}p(x_{0})Q(x_{0})\int_{R^{N-1}}|u(x',0)|^{2_{*}}dx',$$
We have the following result
\begin{proposition}
We have
\begin{equation*}
\inf_{u\,\in H^{1}(\R^{N}_{+})\setminus \{0\}}\max_{t\geq 0}J(t u)\leq M(S,\,S_{1}),\quad \textrm{where $M(S,\,S_{1})$ is defined in (1.4)}.
\end{equation*}
\label{pr1}
\end{proposition}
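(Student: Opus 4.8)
The plan is to get rid of the inner $\max_{t\ge0}$ first. Fix $u\in H^{1}(\R^{N}_{+})\setminus\{0\}$ and abbreviate
\[
a=p(x_{0})\int_{\R^{N}_{+}}|\nabla u|^{2}dx,\quad b=\int_{\R^{N}_{+}}|u|^{2^{*}}dx,\quad d=p(x_{0})Q(x_{0})\int_{\R^{N-1}}|u(x',0)|^{2_{*}}dx',
\]
so that $J(tu)=\frac12 a t^{2}-\frac{1}{2^{*}}b t^{2^{*}}-\frac{1}{2_{*}}d t^{2_{*}}$ with $a,b,d>0$. Since $2<2_{*}<2^{*}$, this vanishes at $0$, is positive for small $t$, and tends to $-\infty$; hence it has a unique maximiser $t_{*}>0$. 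Setting $\tau=t_{*}^{2/(N-2)}$ (so $t_{*}^{2^{*}-2}=\tau^{2}$, $t_{*}^{2_{*}-2}=\tau$), the critical equation $\tfrac{d}{dt}J(tu)=0$ divided by $t_{*}$ becomes the quadratic $b\tau^{2}+d\tau-a=0$.

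\textbf{Step 2 (closed form matching $M$).} Using $t_{*}^{2}=\tau^{N-2}$, $t_{*}^{2^{*}}=\tau^{N}$, $t_{*}^{2_{*}}=\tau^{N-1}$, the identities $\frac12-\frac{1}{2^{*}}=\frac1N$, $\frac12-\frac{1}{2_{*}}=\frac1{2(N-1)}$, and substituting $a=b\tau^{2}+d\tau$, one finds $\max_{t\ge0}J(tu)=\frac1N b\tau^{N}+\frac{1}{2(N-1)}d\tau^{N-1}$. Inserting the positive root $\tau=2a\big(d+\sqrt{d^{2}+4ab}\,\big)^{-1}$ and simplifying with the elementary identity $\tfrac{\beta}{1+\sqrt{1+\beta}}=\sqrt{1+\beta}-1$ (here $\beta=4ab/d^{2}$) gives the key closed form
\[
\max_{t\ge0}J(tu)=\frac{2^{N-2}}{N(N-1)}\,\frac{a^{N-1}}{d^{N-2}}\;h\big(\sqrt{1+4E_{u}}\big),\qquad E_{u}:=\frac{ab}{d^{2}},\quad h(s):=\frac{1+(N-1)s}{(1+s)^{N-1}} .
\]
A direct rearrangement of the definition (1.4) shows that $M(S,S_{1})$ has \emph{exactly} this shape, with $a^{N-1}/d^{N-2}$ replaced by $\Lambda:=\frac{p(x_{0})}{(Q(x_{0}))^{N-2}}S_{1}^{N-1}$ and $E_{u}$ replaced by $E$. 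Thus it suffices to produce one admissible $u$ with $\frac{a^{N-1}}{d^{N-2}}\,h\big(\sqrt{1+4E_{u}}\big)\le \Lambda\,h\big(\sqrt{1+4E}\big)$.

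\textbf{Step 3 (the test functions).} I would take $u=u_{\varepsilon,x_{0}}$, the cut-off bubbles built from the extremals $U_{\varepsilon,y}$ of the limit problem, whose energies are recorded in (\ref{ah1})--(\ref{ah3}). As $\varepsilon\to0$ these give $a=p(x_{0})A_{\mu}+o(1)$, $b=B_{\mu}+o(1)$, $d=p(x_{0})Q(x_{0})C_{\mu}+o(1)$, hence $a^{N-1}/d^{N-2}=\frac{p(x_{0})}{(Q(x_{0}))^{N-2}}\,A_{\mu}^{N-1}C_{\mu}^{-(N-2)}+o(1)$ and $E_{u}=A_{\mu}B_{\mu}\big(p(x_{0})(Q(x_{0}))^{2}C_{\mu}^{2}\big)^{-1}+o(1)$. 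Since $\mu$ is free, passing to the infimum over $\mu$ (and $\varepsilon\to0$) bounds $\inf_{u}\max_{t}J(tu)$ from above. The quotients $A_{\mu}^{N-1}/C_{\mu}^{N-2}$ and $A_{\mu}B_{\mu}/C_{\mu}^{2}$ are controlled by the sharp trace constant $S_{1}$ and the sharp Sobolev constant $S$ through the explicit Beta/Gamma integrals (\ref{imen1})--(\ref{imen3}), and I expect the $\mu$-minimisation to return precisely $\Lambda$ and $E$, i.e. the value $M(S,S_{1})$.

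\textbf{Main obstacle.} Steps 1--2 are routine once the substitution $\tau=t_{*}^{2/(N-2)}$ is spotted; the real work is Step 3. The mechanism forcing the exact matching is the monotonicity of $h$: since $h'(s)=\dfrac{(N-1)(2-N)s}{(1+s)^{N}}<0$ for $N\ge3$, the value $\max_{t}J(tu)$ decreases as the discriminant $1+4E_{u}$ grows, so the optimal $\mu$ is exactly the one balancing the bulk mass $B_{\mu}$ against the boundary mass $C_{\mu}$. Verifying that this balance reproduces $E$ and $\Lambda$ — reconciling the factor $2^{2/N}$ between the half-space and full-space Sobolev constants that appears in $E$ — is the only delicate point, and it is where the explicit constants $\gamma_{N}$, $A_{\mu}$, $B_{\mu}$, $C_{\mu}$ must be carried through.
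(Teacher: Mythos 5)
Your Steps 1--2 are correct and are, in substance, the paper's own computation: the paper evaluates $J$ on the explicit family $U_{\varepsilon,0}$ of (\ref{eqmardi1}), reduces the critical-point equation to the quadratic $B_{\mu}l^{2}+p(x_{0})Q(x_{0})C_{\mu}l-p(x_{0})A_{\mu}=0$ in $l=t^{2/(N-2)}$, and arrives at the same closed form for the maximum; your identity $\max_{t\geq0}J(tu)=\frac{2^{N-2}}{N(N-1)}\frac{a^{N-1}}{d^{N-2}}h(\sqrt{1+4E_{u}})$ is a correct and cleaner repackaging of that. Step 3 is also the paper's strategy (bubbles plus the identities $S_{1}=A_{0}/C_{0}^{2/2_{*}}$, $S=A_{\infty}/B_{\infty}^{2/2^{*}}$, $A_{\infty}=2A_{0}$, $B_{\infty}=2B_{0}$), up to two inaccuracies: since $J$ lives on $H^{1}(\R^{N}_{+})$ you should test with $U_{\varepsilon,0}$ itself rather than the cut-off functions $u_{\varepsilon,x_{0}}\in H^{1}(\Omega)$, whose expansions (\ref{ah1})--(\ref{ah3}) contain curvature terms irrelevant here; and no $\varepsilon$-limit or $\mu$-minimisation is actually needed, because by scale invariance $a,b,d$ do not depend on $\varepsilon$ and the single choice $\mu=0$ suffices.

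The genuine gap is that Step 3 stops at the decisive verification, and the exact matching you expect is in fact false as stated. At $\mu=0$, with $a=p(x_{0})A_{0}$, $b=B_{0}$, $d=p(x_{0})Q(x_{0})C_{0}$, the trace identity $S_{1}=A_{0}/C_{0}^{(N-2)/(N-1)}$ does give $a^{N-1}/d^{N-2}=\Lambda$ exactly, where $\Lambda=\frac{p(x_{0})}{(Q(x_{0}))^{N-2}}S_{1}^{N-1}$; but since the half-space Sobolev constant is $2^{-2/N}S$ (because $S=2A_{0}/(2B_{0})^{(N-2)/N}$), one finds
\begin{equation*}
E_{u}=\frac{A_{0}B_{0}}{p(x_{0})Q(x_{0})^{2}C_{0}^{2}}=2^{\frac{2}{N-2}}\,E\neq E
\end{equation*}
(recall $p_{0}=p(x_{0})$ in this section). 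This is precisely the factor you flagged and then left open. The Proposition nevertheless follows from your own monotonicity remark: $h$ is strictly decreasing and $E_{u}>E$, hence $\max_{t\geq0}J(tU_{\varepsilon,0})=\frac{2^{N-2}}{N(N-1)}\Lambda\,h(\sqrt{1+4E_{u}})\leq\frac{2^{N-2}}{N(N-1)}\Lambda\,h(\sqrt{1+4E})=M(S,S_{1})$. So the hole is fillable with tools you already wrote down, but as submitted the proof is incomplete exactly at its crux. (For the record, the paper asserts equality at $\mu=0$; that is consistent only if $E$ in (\ref{eqnouveau valeur1}) is read with $2^{-2/N}p_{0}S$ in place of $p_{0}S$, i.e.\ as the quantity $E'$ appearing in the Palais--Smale analysis of Theorem \ref{Th1}; this factor-of-two bookkeeping is the one truly delicate point of the whole argument.)
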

\begin{proof}~\\
We have
\begin{equation*}
J(t \,U_{\varepsilon, 0})=\frac{t^{2}}{2}p(x_{0}) A_{\mu}-\frac{t^{2^{*}}}{2^{*}}B_{\mu}-\frac{t^{2_{*}}}{2_{*}}p(x_{0})Q(x_{0}) C_{\mu}.
\end{equation*}
Set $h(t)=\frac{t^{2}}{2}p(x_{0}) A_{\mu}-\frac{t^{2^{*}}}{2^{*}}B_{\mu}-\frac{t^{2_{*}}}{2_{*}} p(x_{0})Q(x_{0}) C_{\mu}.$\\
Therefore
$$\max_{t\geq 0}J(t \,U_{\varepsilon, 0})=\max_{t\geq 0} h(t).$$
Let $t_{\mu}$ such that $h(t_{\mu})=\displaystyle\max_{t\geq 0}h(t)$. Then $t_{\mu}$ satisfies
\begin{equation}
p(x_{0}) A_{\mu}-B_{\mu}t_{\mu}^{\frac{4}{N-2}}-p(x_{0})Q(x_{0}) C_{\mu}t_{\mu}^{\frac{2}{N-2}}=0.
%\label{ah5}
\end{equation}
Looking at the polynomial $B_{\mu}l^{2}+p(x_{0})Q(x_{0}) C_{\mu}l-p(x_{0}) A_{\mu}$ we deduce that
\begin{eqnarray*}
t_{\mu}&=&\left[\frac{-p(x_{0})Q(x_{0}) C_{\mu}+\sqrt{(p(x_{0})Q(x_{0}) C_{\mu})^{2}+4p(x_{0}) A_{\mu}B_{\mu}}}{2B_{\mu}} \right]^{\frac{N-2}{2}}\\[\medskipamount]
&=&2^{\frac{N-2}{2}}\left(\frac{ A_{\mu}}{Q(x_{0}) C_{\mu}}\right)^{\frac{N-2}{2}}\frac{1}{\left[1+\sqrt{1+4\frac{p(x_{0}) A_{\mu}B_{\mu}}{(Q(x_{0}) C_{\mu})^{2}}}\right]^{\frac{N-2}{2}}}.
\end{eqnarray*}
Hence
\begin{equation*}
h(t_{\mu})=t_{\mu}^{2}\left[\frac{p(x_{0})A_{\mu}}{N}-\frac{N-2}{2N(N-1)}p(x_{0})Q(x_{0})C_{\mu}t_{\mu}^{\frac{2}{N-2}}\right].
\end{equation*}
By a standard computation we have
\begin{equation*}
h(t_{\mu})= A_{\mu}\left(\frac{\left(\frac{2 A_{0}}{Q(x_{0}) C_{0}}\right)}{1+\sqrt{1+4\frac{A_{\mu}B_{\mu}}{p(x_{0})(Q(x_{0}))^{2}C_{\mu}^{2}}}} \right)^{N-2}\left[\frac{1}{N}-\frac{N-2}{N(N-1)}\frac{1}{1+\sqrt{1+4\frac{A_{\mu}B_{\mu}}{p(x_{0})(Q(x_{0}))^{2}C_{\mu}^{2}}}}\right]
\end{equation*}
From (\ref{imen1})-(\ref{imen3}) we see, for $\mu>0$ small enough, that
\begin{equation*}
\begin{array}{lll}
h(t_{\mu})&= A_{0}\left(\frac{\left(\frac{2 A_{0}}{Q(x_{0}) C_{0}}\right)}{1+\sqrt{1+4\frac{A_{0}B_{0}}{p(x_{0})(Q(x_{0}))^{2}C_{0}^{2}}}} \right)^{N-2}\left[\frac{1}{N}-\frac{N-2}{N(N-1)}\frac{1}{1+\sqrt{1+4\frac{A_{0}B_{0}}{p(x_{0})(Q(x_{0}))^{2}C_{0}^{2}}}}\right]\\[\medskipamount]
&+\mu L +o(\mu),
\end{array}
\end{equation*}
where $L$ is a constant.\\
Using the fact that $S_{1}=\frac{A_{0}}{(C_{0})^{\frac{2}{2_{*}}}}$, $S=\frac{A_{\infty}}{(B_{\infty})^{\frac{2}{2^{*}}}}$, $A_{\infty}=2 A_{0}$ and $B_{\infty}=2B_{0}$, we obtain, for $\mu > 0$ small enough, that
\begin{equation*}
\max_{t\geq 0}J(t \,U_{\varepsilon, 0})=h(t_{\mu})=M(S,\,S_{1})+\mu\,L+o(\mu).
\end{equation*}
 This gives the desired result.
\end{proof}
%%%%%%%%%%%%%%%%%%%%%%%%%%%%%%%%%%
%%%%%%%%%%%%%%%%%%%%%%%%
%%%%%%%%%%%%%%%%%%%%%%%%%%%%%%%%%%%%%%%%%%%%
Now, we will show, under some additional conditions on $f(x,u)$, that $u_{\varepsilon,x_{0}}$, defined by (\ref{eqmardi1}), satisfies condition (\ref{mportant}).\\
We have
\begin{eqnarray*}
\Phi(tu_{\varepsilon,x_{0}})&=&\frac{1}{2}t^{2}\int_{\Omega}p(x)|\nabla
u_{\varepsilon,x_{0}}|^{2}dx-\frac{t^{2^{*}}}{2^{*}}\int_{\Omega}|u_{\varepsilon,x_{0}}|^{2^{*}}dx
-\frac{t^{2_{*}}}{2_{*}}\int_{\partial\Omega}p(x)Q(x)|u_{\varepsilon,x_{0}}|^{2_{*}}ds_{x}\\[\medskipamount]
&-&\int_{\Omega}F(x,tu_{\varepsilon,x_{0}})dx.
\end{eqnarray*}
Since $f(x,u)$ is a lower-order perturbation of $|u|^{2^{*}-1}$, we see that $\displaystyle\lim_{t\rightarrow +\infty}\Phi(tu_{\varepsilon,x_{0}})=-\infty$. Therefore $\displaystyle\sup_{t\geq 0}\Phi(t u_{\varepsilon,x_{0}})$ is achieved at some $t_{\varepsilon}\geq 0$ and $t_{\varepsilon}$ is bounded in $\R_{+}$.\\
From now we suppose that $t_{\varepsilon}>0$, otherwise condition (\ref{mportant}) is easily satisfied.\\
We write $t_{\varepsilon}=t_{0}+O(\varepsilon)$ when $N\geq 4$ and $t_{\varepsilon}=t_{0}+O(\varepsilon|\ln(\varepsilon)|)$ when $N=3$, using (\ref{ah1})-(\ref{ah3}) we get\\
{\bf{If $N\geq 4$:}}
\begin{equation*}
\begin{array}{lllll}
\displaystyle\Phi(t_{\varepsilon} u_{\varepsilon,x_{0}})&=\displaystyle\frac{t_{\varepsilon}^{2}}{2}p(x_{0})\int_{\R^{N}_{+}}|\nabla U_{\varepsilon,0}|^{2}dx-\frac{t_{\varepsilon}^{2^{*}}}{2}\int_{\R^{N}_{+}}|U_{\varepsilon,0}|^{2^{*}}dx\\[\medskipamount]
&\displaystyle-\frac{t_{\varepsilon}^{2_{*}}}{2}p(x_{0})Q(x_{0})\int_{R^{N-1}}|U_{\varepsilon,0}(x',0)|^{2_{*}}dx'
-\frac{t_{0}^{2}}{2}\mu H(x_{0}) K_{1} \varepsilon +\frac{t_{0}^{2^{*}}}{2^{*}}\mu H(x_{0}) K_{2} \varepsilon\\[\medskipamount]
&\displaystyle-\frac{t_{0}^{2_{*}}}{2_{*}}\mu H(x_{0}) K_{3} \varepsilon -\int_{\Omega}F(x,tu_{\varepsilon,x_{0}})dx+o(\varepsilon)\\[\medskipamount]
&\leq \displaystyle\max_{t\geq 0} J(t U_{\varepsilon,0})-\frac{t_{0}^{2}}{2}\mu H(x_{0})p(x_{0}) K_{1} \varepsilon +\frac{t_{0}^{2^{*}}}{2^{*}}\mu H(x_{0}) K_{2} \varepsilon
\\[\medskipamount]
&-\displaystyle\frac{t_{0}^{2_{*}}}{2_{*}}\mu H(x_{0}) p(x_{0})Q(x_{0}) K_{3} \varepsilon -\int_{\Omega}F(x,tu_{\varepsilon,x_{0}})dx+o(\varepsilon),
\end{array}
\end{equation*}
{\bf{If $N=3$}:}
\begin{equation*}
\begin{array}{lllll}
\displaystyle\Phi(t_{\varepsilon} u_{\varepsilon,x_{0}})&=\displaystyle\frac{t_{\varepsilon}^{2}}{2}p(x_{0})\int_{\R^{N}_{+}}|\nabla U_{\varepsilon,0}|^{2}dx-\frac{t_{\varepsilon}^{2^{*}}}{2}\int_{\R^{N}_{+}}|U_{\varepsilon,0}|^{2^{*}}dx\\[\medskipamount]
&\displaystyle-\frac{t_{\varepsilon}^{2_{*}}}{2}p(x_{0})Q(x_{0})\int_{R^{N-1}}|U_{\varepsilon,0}(x',0)|^{2_{*}}dx'-\frac{t_{0}^{2}}{2}\mu H(x_{0}) K_{0} \varepsilon |\ln(\varepsilon)|\\[\medskipamount]
&-\int_{\Omega}F(x,tu_{\varepsilon,x_{0}})dx+o(\varepsilon|\ln(\varepsilon)|)\\[\medskipamount]
&\leq \displaystyle\max_{t\geq 0} J(t U_{\varepsilon,0})-\frac{t_{0}^{2}}{2}\mu H(x_{0})p(x_{0}) K_{0} \varepsilon|\ln(\varepsilon)|
-\displaystyle\int_{\Omega}F(x,tu_{\varepsilon,x_{0}})dx+o(\varepsilon|\ln(\varepsilon)|).
\end{array}
\end{equation*}
Therefore
\begin{equation}
\begin{array}{llll}
\displaystyle\Phi(t_{\varepsilon} u_{\varepsilon,x_{0}})&\leq\displaystyle  M_{1}(S,\,S_{1})- \displaystyle\int_{\Omega}F(x,tu_{\varepsilon,x_{0}})dx+o(\mu)\\[\medskipamount]
&-\mu H(x_{0})\left\{\begin{array}{ll}
 \,[\frac{t_{0}^{2}}{2}p(x_{0}) K_{1}-\frac{t_{0}^{2^{*}}}{2^{*}}K_{2}+\frac{t_{0}^{2_{*}}}{2_{*}}p(x_{0})Q(x_{0}) K_{3}]\varepsilon+o(\varepsilon) \quad &\textrm{if $N\geq 4$}\\[\medskipamount]
\frac{t_{0}^{2}}{2}p(x_{0}) K_{0}\varepsilon|\ln(\varepsilon)|+o(\varepsilon|\ln(\varepsilon)|) \quad &\textrm{if $N=3$}.
\end{array}
\right.
\end{array}
\label{tunis1}
\end{equation}
Now, we need to give a explicit form of $t_{0}$. Since $\displaystyle\sup_{t\geq 0}\Phi(t u_{\varepsilon,x_{0}})=\displaystyle\sup_{t\geq 0}h(t)$ is achieved at $t_{\varepsilon}$ then $h'(t_{\varepsilon})=0$ and letting $\varepsilon\rightarrow 0$ we get
\begin{equation}
\displaystyle\int_{R^{N}_{+}}|\nabla U_{1,0}|^{2}dx-\frac{t_{0}^{2^{*}-2}}{p(x_{0})}\int_{\R^{N}_{+}}|U_{1,0}|^{2^{*}}dx-Q(x_{0})t_{0}^{2_{*}-2}\int_{R^{N-1}}|U_{1,0}|^{2_{*}}dx'=0.
\label{tunis2}
\end{equation}
On the other hand, since $U_{1,0}$ is a solution of (\ref{equation limite}) we see that
\begin{equation}
\displaystyle\int_{R^{N}_{+}}|\nabla U_{1,0}|^{2}dx-N(N-2)\int_{\R^{N}_{+}}|U_{1,0}|^{2^{*}}dx-\mu\int_{R^{N-1}}|U_{1,0}|^{2_{*}}dx'=0.
\label{tunis3}
\end{equation}
Combining (\ref{tunis2}) and (\ref{tunis3}) we obtain
$t_{0}=(p(x_{0})\,N(N-2))^{\frac{1}{2^{*}-2}}$.\\
Using (\ref{imen4}) and (\ref{imen5}), for $N\geq 4$, we see that
\begin{equation*}
\begin{array}{lll}
\frac{t_{0}^{2}}{2}p(x_{0}) K_{1}-\frac{t_{0}^{2^{*}}}{2^{*}}K_{2}+\frac{t_{0}^{2_{*}}}{2_{*}}p(x_{0})Q(x_{0}) K_{3}&=p(x_{0})\frac{(N-2)^{2}}{2}\left(\frac{4}{N-3}+\frac{2(N-1)}{(N-3)(N-2)^{2}}\mu^{2}\right)\\[\medskipamount]
&+2\frac{(N-1)}{(N-2)}\sqrt{N(N-2)}\sqrt{p(x_{0})}p(x_{0})Q(x_{0})\mu\\[\medskipamount]
&=K>0.
\end{array}
\end{equation*}
Combining this with (\ref{tunis1}) we obtain
\begin{equation}
\begin{array}{ll}
\displaystyle\Phi(t_{\varepsilon} u_{\varepsilon,x_{0}})\leq M_{1}(S,\,S_{1})-\int_{\Omega}F(x,tu_{\varepsilon,x_{0}})dx-\mu H(x_{0})\left\{\begin{array}{ll}
K \varepsilon+o(\varepsilon)\quad &\textrm{if $N\geq 4$}\\[\medskipamount]
K_{0}\varepsilon|\ln(\varepsilon)|+o(\varepsilon|\ln(\varepsilon)|) \quad &\textrm{if $N=3$}.
\end{array}
\right.
\end{array}
\label{sami1}
\end{equation}
Using (\ref{q1}) and the fact that $\int_{\Omega}|u_{\varepsilon,x_{0}}|^{2}dx=o(\varepsilon)$, (\ref{sami1}) becomes
\begin{equation}
\begin{array}{ll}
\displaystyle\Phi(t_{\varepsilon} u_{\varepsilon,x_{0}})\leq M_{1}(S,\,S_{1})-\int_{\Omega}G(x,tu_{\varepsilon,x_{0}})dx-\mu H(x_{0})\left\{\begin{array}{ll}
K \varepsilon+o(\varepsilon)\quad &\textrm{if $N\geq 4$}\\[\medskipamount]
K_{0}\varepsilon|\ln(\varepsilon)|+o(\varepsilon|\ln(\varepsilon)|) \quad &\textrm{if $N=3$}.
\end{array}
\right.
\end{array}
\label{sami2}
\end{equation}
where $G(x,s)=\int_{0}^{s}g(x,r)dr$.
%%%%%%%%%%%%%%%%%%%%%%%
%%%%%%%%%%%%%%%%%%%%%%%%%%%%%ù
%%%%%%%%%%%%%%%%%%%%%%%%%%%%

Now, we are able to give sufficient conditions on $f$ to have the condition (\ref{mportant}):
\begin{proposition}~\\
Assume that $f(x,\,u)$ satisfies (\ref{q1})-(\ref{q4}) and that $H(x_{0})>0$. Suppose that there exists some continuous function $g(.)$ such that $g(x,\,u)\geq g(u)$ for a. e. $x\in\Omega$ and for all $u\in\R$ and the primitive $G(u)=\int_{0}^{u}g(t)dt$ satisfies :
\begin{equation}
\lim_{\varepsilon\rightarrow 0}\varepsilon^{N-1}\int_{\frac{\mu}{N-2}}^{+\infty}(1+t^{2})^{\frac{N-1}{2}}
\int_{0}^{+\infty}G\left(\frac{\varepsilon^{-\frac{N-2}{2}}}{(1+t^{2})^{\frac{N-2}{2}}(1+r^{2})^{\frac{N-2}{2}}}
\right)r^{N-2}drdt=0\quad\textrm{for $N\geq4$,} \label{mo3}
\end{equation}
and
\begin{equation}
\displaystyle\lim_{\varepsilon\rightarrow
0}\frac{\varepsilon^{2}}{|\ln(\varepsilon)|}\int_{\frac{\mu}{N-2}}^{+\infty}(1+t^{2})
\int_{0}^{+\infty}G\left(\frac{\varepsilon^{-\frac{1}{2}}}{(1+t^{2})^{\frac{1}{2}}(1+r^{2})^{\frac{1}{2}}}
\right)rdrdt=0\quad\textrm{for $N=3$}.
\label{mo4}
\end{equation}
Then condition (\ref{mportant}) holds.
\label{proposition}
\end{proposition}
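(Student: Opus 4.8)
The plan is to verify (\ref{mportant}) directly with the concentrating test function $v_{0}=u_{\varepsilon,x_{0}}$ of (\ref{eqmardi1}), which is nonnegative on $\Omega$ and does not vanish on $\partial\Omega$, for a suitably chosen small value of the parameter $\mu$ and for $\varepsilon\to0$. Almost all of the work is already contained in the estimate (\ref{sami2}): since $f$ is a lower-order perturbation of $|u|^{2^{*}-1}$, the supremum $\sup_{t\ge0}\Phi(tu_{\varepsilon,x_{0}})$ is attained at a bounded $t_{\varepsilon}$ (with $t_{\varepsilon}\to t_{0}=(p(x_{0})N(N-2))^{1/(2^{*}-2)}$), and one has
$$\Phi(t_{\varepsilon}u_{\varepsilon,x_{0}})\le M_{1}(S,S_{1})-\int_{\Omega}G(x,t_{\varepsilon}u_{\varepsilon,x_{0}})\,dx-\mu H(x_{0})K\varepsilon+o(\varepsilon)\qquad(N\ge4),$$
with $M_{1}(S,S_{1})=\max_{t\ge0}J(tU_{\varepsilon,0})$, and the corresponding inequality with $K_{0}\varepsilon|\ln\varepsilon|$ in place of $K\varepsilon$ when $N=3$. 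It therefore remains only to make the perturbation integral negligible and to locate the right-hand side strictly below $M(S,S_{1})$.

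I would first dispose of the perturbation. The hypothesis $g(x,u)\ge g(u)$ gives, after integrating, $G(x,u)\ge G(u)$, whence $-\int_{\Omega}G(x,t_{\varepsilon}u_{\varepsilon,x_{0}})\,dx\le-\int_{\Omega}G(t_{\varepsilon}u_{\varepsilon,x_{0}})\,dx$. Since $t_{\varepsilon}$ is bounded and $u_{\varepsilon,x_{0}}=U_{\varepsilon,x_{0}}$ where $\xi\equiv1$, I would rescale $x=x_{0}+\varepsilon y$ and pass to polar coordinates in the tangential variable $x'$, keeping the normal variable adapted to the shifted centre $x_{N}+\mu(N-2)^{-1}\varepsilon$. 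This is the same computation as in the verification of (\ref{eq29})--(\ref{eq30}) in Lemma~\ref{lm2}, now carried out for the profile $U$ instead of $W$; it turns $\varepsilon^{-1}\int_{\Omega}G(t_{\varepsilon}u_{\varepsilon,x_{0}})\,dx$ (respectively $(\varepsilon|\ln\varepsilon|)^{-1}\int_{\Omega}$ when $N=3$) into exactly the double integral of (\ref{mo3}) (respectively (\ref{mo4})). As those limits are zero, $\int_{\Omega}G(x,t_{\varepsilon}u_{\varepsilon,x_{0}})\,dx=o(\varepsilon)$ for $N\ge4$ and $=o(\varepsilon|\ln\varepsilon|)$ for $N=3$, so this term is absorbed into the error.

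With the perturbation absorbed, the bound collapses to $\Phi(t_{\varepsilon}u_{\varepsilon,x_{0}})\le M_{1}(S,S_{1})-\mu H(x_{0})K\varepsilon+o(\varepsilon)$ (and its $N=3$ analogue). For $H(x_{0})>0$, $K,K_{0}>0$ and $\mu>0$ the mean-curvature term is the favourable one, exactly as in the case $H(x_{0})>0$ of Lemma~\ref{lemma2}, which is why the weak requirements (\ref{mo3})--(\ref{mo4}) (limits equal to $0$ rather than $+\infty$) suffice here. Finally, using Proposition~\ref{pr1} to write $M_{1}(S,S_{1})=M(S,S_{1})+\mu L+o(\mu)$, I would choose $\mu$ and then $\varepsilon$ small so that the right-hand side falls strictly below $M(S,S_{1})$, which is precisely (\ref{mportant}).

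The main obstacle is this final choice of parameters. The leading value $M_{1}(S,S_{1})$ departs from $M(S,S_{1})$ already at order $\mu$, whereas the favourable mean-curvature contribution is only of order $\mu\varepsilon$ (respectively $\mu\varepsilon|\ln\varepsilon|$); one must therefore pin down the sign and magnitude of $\mu$ from the expansion in Proposition~\ref{pr1} before letting $\varepsilon\to0$, so that it is genuinely the positivity of $H(x_{0})$ that produces the strict gap below $M(S,S_{1})$. The tangential--normal change of variables identifying $\int_{\Omega}G$ with (\ref{mo3})--(\ref{mo4}) is the only other delicate computation, but it is routine once modelled on Lemma~\ref{lm2}.
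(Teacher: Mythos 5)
Your proposal is, in substance, the paper's own argument: the paper disposes of this proposition in a single line (``the proof becomes directly from (\ref{sami2})''), and everything you assemble --- the test function $u_{\varepsilon,x_{0}}$ of (\ref{eqmardi1}), the bound (\ref{sami2}) built from the expansions (\ref{ah1})--(\ref{ah3}), the use of $G(x,u)\ge G(u)$ together with the rescaling $x=x_{0}+\varepsilon y$ and the substitution $r\mapsto r\sqrt{1+t^{2}}$ (which is exactly what produces the product $(1+t^{2})^{\frac{N-2}{2}}(1+r^{2})^{\frac{N-2}{2}}$ and the prefactors $\varepsilon^{N-1}$ and $\varepsilon^{2}/|\ln\varepsilon|$ in (\ref{mo3})--(\ref{mo4})), and the role of $-\mu H(x_{0})K\varepsilon$ as the term creating the strict gap --- is precisely the chain of estimates that the one-line proof invokes. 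So the approach is the same, and your reconstruction of it is faithful.

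However, the ``main obstacle'' you flag at the end is a genuine gap, and you should know that the paper does not close it either. The right-hand side of (\ref{sami2}) is $M_{1}(S,S_{1})$, not $M(S,S_{1})$; by the proof of Proposition \ref{pr1} these differ by $\mu L+o(\mu)$, where $L$ is a constant whose sign is never determined. For fixed $\mu>0$, letting $\varepsilon\to 0$ in (\ref{sami2}) (with (\ref{mo3})--(\ref{mo4}) making the $G$-term $o(\varepsilon)$, resp. $o(\varepsilon|\ln\varepsilon|)$) yields $\sup_{t\ge 0}\Phi(tu_{\varepsilon,x_{0}})<M_{1}(S,S_{1})$, and the curvature gain, being only $O(\mu\varepsilon)$ (resp. $O(\mu\varepsilon|\ln\varepsilon|)$), can never absorb a deficit $\mu L>0$ that is independent of $\varepsilon$. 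Hence condition (\ref{mportant}) follows from this scheme only if $L\le 0$ (then take $\mu>0$ small, then $\varepsilon$ small), or if one may choose $\mu$ with sign opposite to that of $L$ --- but in that case it is the sign of $L$, not the positivity of $H(x_{0})$, that produces the gap, contrary to the statement's intent. Your remark that one must ``pin down the sign and magnitude of $\mu$ before letting $\varepsilon\to 0$'' is exactly right, and neither your write-up nor the paper's one-line proof (which silently treats $M_{1}(S,S_{1})$ as if it were $M(S,S_{1})$) supplies the missing ingredient: an evaluation of the sign of $L$, or a sharpening of Proposition \ref{pr1} to $\max_{t\ge 0}J(tU_{\varepsilon,0})\le M(S,S_{1})$ for the value of $\mu$ actually used. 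So your route matches the paper step for step, and the unresolved step you identify is a defect of the paper's own proof rather than one introduced by your reconstruction.
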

\begin{proof}~\\
The proof become directly from (\ref{sami2}).
\end{proof}
{\bf{Example of $f$:}}\\
All the assumptions of Proposition~{\ref{proposition}} are satisfied for the following functions:
\begin{enumerate}
\item $g(x,\,u)=g(u)= \pm \displaystyle|u|^{r-2}u\quad$  with $2<r<2_{*}$ and $u\in\R$.
\item $g(x,\,u)=g(u)=\displaystyle\frac{u^{2_{*}-1}(2_{*}\ln(u)-1)}{(\ln(u))^{2}}\quad$  for $u>0$.
\end{enumerate}
%%%%%%%%%%%%%%%%%%%
%%%%%%%%%%%%%%

\end{document}